\title{On the center of 3-dimensional and 4-dimensional Sklyanin algebras}
\author{Kevin De Laet}
\address{Department of Mathematics, University of Antwerp \\ 
 Middelheimlaan 1, B-2020 Antwerp (Belgium) \\ {\tt kevin.delaet2@uantwerpen.be}}
\date{}
\tikzset{
  vertice/.style={circle,draw=black},
  decoration={markings,mark=at position 0.5 with {\arrow{>}}}
}
\theoremstyle{plain}
\newcommand{\C}{\mathbb{C}}
\newcommand{\N}{\mathbb{N}}
\newcommand{\Z}{\mathbb{Z}}
\newcommand{\V}{\mathbf{V}}
\newcommand{\VV}{\mathbb{V}_4}
\newcommand{\PP}{\mathbb{P}}
\newtheorem{theorem}{Theorem}
\numberwithin{theorem}{section}
\newtheorem{lemma}[theorem]{Lemma}
\newtheorem{proposition}[theorem]{Proposition}
\newtheorem{remark}[theorem]{Remark}
\theoremstyle{definition}
\newtheorem{mydef}[theorem]{Definition}
\DeclareMathOperator{\Ext}{Ext}
\DeclareMathOperator{\Hom}{Hom}
\DeclareMathOperator{\Aut}{Aut}
\DeclareMathOperator{\Emb}{Emb}
\DeclareMathOperator{\Ann}{Ann}
\numberwithin{equation}{section}
\begin{document}
\begin{abstract}
In this article, a new proof is given of the description of the center of quadratic Sklyanin algebras of global dimension three and four and the center of cubic Sklyanin algebras of global dimension three. The representation theory of the Heisenberg groups $H_2$, $H_3$ and $H_4$ will play an important role. In addition a new proof is given of Van den Bergh's result regarding noncommutative quadrics.
\end{abstract}
\maketitle
\tableofcontents
\keywords{Sklyanin algebras, Heisenberg groups}
\footnote{\textit{2000 Mathematics Subject Classification:}16W20}
\section{Introduction}
\label{introduction}
In the first papers about 3-dimensional Sklyanin (\cite{artin1990some} and \cite{artin1991modules}), Artin, Tate and Van den Bergh proved (amongst other things) the following facts:
\begin{itemize}
\item a quadratic Sklyanin algebra $Q_3(E,\tau)$ of global dimension three contains a normal element $c_3 \in (Q_3(E,\tau))_3$ such that $Q_3(E,\tau)/(c_3) \cong {}_3\mathcal{O}_\tau(E,3)$, with ${}_3\mathcal{O}_\tau(E,3)$ the twisted homogeneous coordinate ring of $E$ associated to the automorphism defined by addition with $\tau$ and the line bundle $\mathcal{L}(3O)$, and
\item a cubic Sklyanin algebra $Q_2(E,\tau)$ of global dimension three contains a normal element $c_4 \in (Q_2(E,\tau))_4$ such that $Q_2(E,\tau)/(c_4) \cong {}_2\mathcal{O}_\tau(E)$, with ${}_2\mathcal{O}_\tau(E)$ the twisted homogeneous coordinate ring of $E$ associated to the automorphism defined by addition with $\tau$ and the line bundle $\mathcal{L}(2O)$.
\end{itemize}
In \cite{artin1987graded}, it was shown that $Q_3(E,\tau)$ contains a central element of degree 3 and $Q_2(E,\tau)$ contains a central element of degree 4 by a computer calculation. Consequently, these central elements were equal to $c_3$, respectively to $c_4$ as above, as generically ${}_3\mathcal{O}_\tau(E)$ and ${}_2\mathcal{O}_\tau(E)$ contain no central nontrivial elements.
\par In addition, in \cite{van2011noncommutative} it was proved by Van den Bergh that the second Veronese subalgebra of $Q_2(E,\tau)$, $Q_2(E,\tau)^{(2)}=\oplus_{k\in \N} (Q_2(E,\tau))_{2k}$  is a quotient of a 4-dimensional quadratic Sklyanin algebra associated to the point $2\tau$. This fact in noncommutative projective geometry corresponds to the classical Segre embedding of $\PP^1 \times \PP^1$ in $\PP^3$ from commutative algebraic geometry, or perhaps even better, the embedding of the weighted projective plane $\PP(1,1,2)$ in $\PP^3$. In fact, it was shown in \cite{sklyanin1982some} by a computer calculation that a 4-dimensional Sklyanin algebra has two algebraically independent central elements $\Omega_1,\Omega_2$ of degree two and by results of \cite{smith2013noncommutative}, the quotient map $\pi_{E,\tau}:\xymatrix{ Q_4(E,2\tau) \ar@{->>}[r]& Q_2(E,\tau)^{(2)}}$ has as kernel an ideal generated by an element of $\C \Omega_1 + \C \Omega_2$.
\par The purpose of this paper is to prove the following facts in a conceptual way:
\begin{itemize}
\item the normal element $c_3 \in (Q_3(E,\tau))_3$ is central,
\item the normal element $c_4 \in (Q_2(E,\tau))_4$ is central, and
\item a 4-dimensional Sklyanin algebra $Q_4(E,\tau)$ has 2 algebraically independent elements in the center, $\Omega_1$ and $\Omega_2$ of degree two.
\end{itemize}
The proofs of these 3 theorems depend on the following observations:
\begin{itemize}
\item the Heisenberg group $H_3$ of order 27 acts on $Q_3(E,\tau)$ as gradation preserving automorphisms,
\item the Heisenberg group $H_2$ of order 8, which is isomorphic to $D_4$, the dihedral group of order 8, acts on $Q_2(E,\tau)$ as gradation preserving automorphisms, and
\item the Heisenberg group $H_4$ of order 64 acts on $Q_4(E,\tau)$ as gradation preserving automorphisms.
\end{itemize}
The first and third observation follows from the definition of $n$-dimensional Sklyanin algebras in general as shown in \cite{odeskii1989elliptic}, while the second observation follows directly from the equations. 
In addition, it will be shown that $\C c_3$ is the trivial representation of $H_3$, $\C c_4$ is the trivial representation of $H_2$ and $\C \Omega_1 + \C \Omega_2$ is isomorphic to the unique simple 2-dimensional representation of $H_4$ coming from the quotient map $\xymatrix{H_4 \ar@{->>}[r]& H_2}$ that will be explained below. In fact, for $c_3$ the connection between $c_3$ and the Sklyanin algebra $Q_3(E,2\tau)$ will be explained, finishing the discussion started in \cite{laet2015quotients} and \cite{de2015geometry}.
\par The results needed from previous articles by Artin, Tate and Van den Bergh are
\begin{itemize}
\item the algebras $Q_n(E,\tau)$ for each $n\geq 2$ are noetherian domains (follows from \cite{tate1996homological} for $n\geq 3$ and from \cite{artin1987graded} for $n=2$),
\item there are the following equalities of ideals: $(c_3) =\cap_{P\in E} \Ann_{Q_3(E,\tau)}(M_P)$ and $(c_4) = \cap_{P\in E}\Ann_{Q_2(E,\tau)}(M_P)$ for $M_P$ the point module corresponding to the point $P\in E$ (both follow from \cite{artin1987graded}), and
\item there are the following equalities of Hilbert series
\begin{align*}
H_{Q_2(E,\tau)}(t)&=(1-t)^{-2}(1-t^{-2}), &\text{follows from \cite{artin1987graded}},\\
H_{Q_3(E,\tau)}(t)&=(1-t)^{-3}, &\text{follows from \cite{artin1987graded}},\\
H_{Q_4(E,\tau)}(t)&=(1-t)^{-4}, &\text{follows from \cite{tate1996homological}}.
\end{align*}
\end{itemize}
In addition, a computational proof will be given of Van den Bergh's result that $Q_2(E,\tau)^{(2)}$ is a quotient of $Q_4(E,2\tau)$. The reason for this new proof is to show that the quotient map $\pi_{E,\tau}$ defined above is $\VV$-equivariant with $\VV$ the Klein four-group, with respect to a $\VV$-action that will be defined using the above actions of $H_2$, respectively $H_4$ on the algebras $Q_2(E,\tau)$, respectively $Q_4(E,2\tau)$.
\begin{remark}
Although almost everything works in characteristic not equal to $2$ or $3$ or  for not algebraically closed fields $k$, only the specific case $k = \C$ will be discussed here.
\end{remark}
\subsection{Acknowledgements}
The author would like to thank Paul Smith for various discussions regarding Sklyanin algebras, whose question to find a better proof of the existence of central elements in Sklyanin algebras motivated this paper.
\section{The finite Heisenberg groups}
Let $n \in \N$, $n\geq 2$.
\begin{mydef}
The finite Heisenberg group $H_n$ of order $n^3$ is the group presented by generators and relations
$$
\langle e_1, e_2: e_1^n=e_2^n=[e_1,e_2]^n = 1, [e_1,e_2]\text{ central}\rangle.
$$
\end{mydef}
From the relations, it follows that $H_n$ is a non-trivial central extension of $\Z_n \times \Z_n$ with $\Z_n$, that is,
$$
\xymatrix{1\ar[r]&\Z_n\ar[r]&H_n\ar[r]&\Z_n \times \Z_n \ar[r]&1}.
$$
Let $\omega$ be a primitive $n$th root of unity, then $H_n$ has $n^2$ 1-dimensional representations $\{ {}_n\chi_{i,j}: 0\leq i,j \leq n-1\}$, defined by
$$
{}_n\chi_{i,j}(e_1) = \omega^i, \quad  {}_n\chi_{i,j}(e_2) = \omega^j.
$$
Although the representation theory of $H_n$ can be described in general, only the cases $n=2$, $n=3$ and $n=4$ will be considered here.
\subsection{$H_2$} In this case, $H_2 \cong D_4 = \langle s,t: s^4=t^2=1, tst=s^3 \rangle$ by the isomorphism
$$
\xymatrix{D_4 \ar[r]& H_2},\quad s \mapsto e_1e_2, \quad t \mapsto e_1.
$$
Consequently, $H_2$ has a unique 2-dimensional simple representation ${}_2V = \C x + \C y$, defined by
$$
\begin{array}{cc}
e_1 \cdot x = y, &  e_1 \cdot y = x,\\
e_2 \cdot x = x, &  e_2 \cdot y = -y.
\end{array}
$$
Together with the four 1-dimensional representations, these are all the simple representations of $H_2$.
\subsection{$H_3$}
By the fact that the dimension of a simple representation has to divide the order of the group and that $H_3$ is not commutative, there are two simple representations of dimension 3, ${}_3V_1$ and ${}_3V_2$. Let $\omega$ be a primitive $3$rd root of unity, then ${}_3V_1 = \C x + \C y + \C z$ is defined as the representation
$$
\begin{array}{ccc}
e_1 \cdot x = z, &  e_1 \cdot y = x, & e_1 \cdot z = y,\\
e_2 \cdot x = x, &  e_2 \cdot y = \omega y, & e_2 \cdot z = \omega^2 z.
\end{array}
$$
The representation ${}_3V_2$ is the dual of  ${}_3V_1$, with basis $\{x^*,y^*,z^*\}$ and associated action
$$
\begin{array}{ccc}
e_1 \cdot x^* = z^*, &  e_1 \cdot y^* = x^*, & e_1 \cdot z^* = y^*,\\
e_2 \cdot x^* = x^*, &  e_2 \cdot y^* = \omega^2 y^*, & e_2 \cdot z^* = \omega z^*.
\end{array}
$$
\subsection{$H_4$}
\label{sub:H4rep}
Similar to the case $n=3$, there are two 4-dimensional simple representations 
${}_4V_1= \oplus_{i=0}^3 \C x_i$ and ${}_4V_{3}=\oplus_{i=0}^3 \C x_i^*$ that are dual to each other, with action
$$\begin{array}{cccc}
e_1 \cdot x_0 = x_{3}, & e_1 \cdot x_1 = x_0, & e_1 \cdot x_2 = x_1,& e_1 \cdot x_3 = x_2,\\
e_2 \cdot x_0 = x_0,& e_2 \cdot x_1 = I x_1, & e_2 \cdot x_2 = -x_2,& e_2 \cdot x_3 = -I x_3,
\end{array}
$$
and
$$\begin{array}{cccc}
e_1 \cdot x^*_0 = x^*_{3}, & e_1 \cdot x^*_1 = x^*_0, & e_1 \cdot x^*_2 = x^*_1,& e_1 \cdot x^*_3 = x^*_2,\\
e_2 \cdot x^*_0 = x^*_0,& e_2 \cdot x^*_1 = -I x^*_1, & e_2 \cdot x^*_2 = -x^*_2,& e_2 \cdot x^*_3 = I x^*_3.
\end{array}
$$
In addition, there is a surjective group morphism $\xymatrix{H_4 \ar@{->>}[r]
& H_2}$ with kernel the subgroup $\langle e_1^2,e_2^2,[e_1,e_2]^2 \rangle$. This gives a 2-dimensional simple representation ${}_2V$. Using character theory, one can show that $${}_2V\otimes {}_4\chi_{i,j} \cong{}_2 V\otimes {}_4\chi_{i',j'}\Leftrightarrow (i-i',j-j')\in 2\Z_4 \times 2 \Z_4.$$
This implies that there are four inequivalent 2-dimensional representations. Define ${}_4V_{i,j} = {}_2V \otimes {}_4\chi_{i,j}$ for $(i,j) \in (\Z_4 \times \Z_4)/(2\Z_4 \times 2 \Z_4) \cong \Z_2 \times \Z_2$.
\par By a dimension count, these are all the simple representations of $H_4$.
\section{Artin-Schelter regular algebras}
Recall from \cite{artin1987graded} the definition of an Artin-Schelter regular algebra.
\begin{mydef}
Let $A = \oplus_{n\in \N} A_n$ be a graded algebra with $A_0 = \C$. Then $A$ is an Artin-Schelter regular algebra of global dimension $d$ if
\begin{itemize}
\item $A$ has finite global dimension $d$,
\item $A$ has finite Gelfand-Kirillov dimension, and
\item $A$ is Gorenstein, which means that 
$$
\Ext^q_A(\C,A) =\begin{cases}
0 &\text{ if } q \neq d,\\
\C&\text{ if } q = d,
\end{cases}
$$
with $\C = A/A^+$ as $A$-module.
\end{itemize}
The standard example of an Artin-Schelter regular algebra of global dimension $d$ is the polynomial ring $\C[x_0,\ldots,x_{d-1}]$. A result from \cite{tate1996homological} shows that Sklyanin algebras of any global dimension $d\geq 3$ are Artin-Schelter regular.
\end{mydef}
\par Recall the definition of a point module from \cite{artin1990some}.
\begin{mydef}
Let $A$ be a graded, connected algebra, finitely generated in degree 1. Let $M$ be a graded right $A$-module. Then $M$ is a point module of $A$ if $M$ is cyclic and has Hilbert series $(1-t)^{-1}$.
\end{mydef}
If $A = T(V)/(R)$ with $R$ homogeneous and generated by relations of degree $\geq 2$, then the point modules are determined by the (up to isomorphism) representations of an infinite quiver
\begin{center}
\begin{tikzpicture}[scale = 2.5]
   \node[vertice] (a) at ( 0, 0) {$1$};
   \node[vertice] (b) at ( 1, 0) {$1$};
   \node[vertice] (c) at ( 2, 0) {$1$};
   \node[vertice] (d) at ( 3, 0) {$1$};
   \node (e) at ( 4, 0) {$\ldots$};
   \node (f) at ( 0.5, 0.04) {$\vdots$};
   \node (g) at ( 1.5, 0.04) {$\vdots$};
   \node (h) at ( 2.5, 0.04) {$\vdots$};
   \node (i) at ( 3.5, 0.04) {$\vdots$};
   \tikzset{every node/.style={fill=white}} 
   \path[->,font=\scriptsize,>=angle 90]
    (a) edge[out=30,in=150]node{$x_{0,0}$} (b)
    (a) edge[out=-30,in=-150]node{$x_{0,n-1}$} (b)
    (b) edge[out=30,in=150]node{$x_{1,0}$} (c)
    (b) edge[out=-30,in=-150]node{$x_{1,n-1}$} (c)
    (c) edge[out=30,in=150]node{$x_{2,0}$} (d)
    (c) edge[out=-30,in=-150]node{$x_{2,n-1}$} (d)
    (d) edge[out=30,in=150]node{$x_{3,0}$} (e)
    (d) edge[out=-30,in=-150]node{$x_{3,n-1}$} (e);
\end{tikzpicture}
\end{center}
with $p_i=[x_{i,0}\ldots:x_{i,n-1}]\in \PP^{n-1}$, with naturally $n = \dim V$. The condition that $M \in \mathbf{GrMod}-A$, with $\mathbf{GrMod}-A$ the category of right graded $A$-modules, then becomes
$$
f \in (R) \text{ of degree }k \Leftrightarrow f^{multi}(p_0,\ldots,p_{k-1})=0.
$$
If $M=\oplus_{n\in \N} M_{n}  \in \mathbf{GrMod}-A$ is a point module, then one defines
$$
M[1] := \oplus_{n\in \N} M_{n+1}
$$
to be the shift of $M$ and by induction, $M[k+1]:=(M[k])[1]$. Under suitable conditions on $A$ (fulfilled by Sklyanin algebras of any global dimension), if $k>0$ is the smallest natural number such that $M[k]\cong M$ in $\mathbf{GrMod}-A$, then $\oplus_{i=0}^{k-1} M[i]$ parametrizes a $\C^*$-orbit of simple representations of dimension $k$, see \cite{smith19924} for a more general treatment.
\begin{proposition}
Let $G \subset \Aut_{gr}(A)$ be a subgroup, with $\Aut_{gr}(A)$ the automorphisms that preserve the gradation. Then the ideal $I = \cap_{M \text{ point module}} \Ann_A(M)$ is a $G$-subrepresentation of $A$, with
$$
\Ann_A(M) = \{x \in A: mx = 0 \text{ for each } m \in M\}.
$$
\label{prop:annpoint}
\end{proposition}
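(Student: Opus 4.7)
The plan is to exploit the natural action of graded automorphisms on the set of point modules. Given $g\in \Aut_{gr}(A)$ and a right graded $A$-module $M$, I would define the twisted module $M^{g}$ to have the same underlying graded vector space as $M$ but with the new right action $m\ast_{g} a := m\cdot g(a)$. Associativity follows from $g$ being an algebra homomorphism, and since $g$ preserves the grading, $M^{g}$ has the same Hilbert series as $M$ and remains cyclic (with the same generator). Hence $M\mapsto M^{g}$ maps point modules to point modules, and since $g$ is invertible this assignment is a bijection on the set of (isomorphism classes of) point modules.

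The key computation is the identification
\[
\Ann_{A}(M^{g}) \;=\; \{a\in A : m\cdot g(a)=0\text{ for all }m\in M\} \;=\; g^{-1}\bigl(\Ann_{A}(M)\bigr).
\]
Applying this with $g$ replaced by $g^{-1}$ yields $g(\Ann_{A}(M)) = \Ann_{A}(M^{g^{-1}})$.

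With these two ingredients, I would finish by a direct set-theoretic manipulation of the intersection. Since $g$ is a bijection on $A$ it commutes with intersections, so
\[
g(I) \;=\; \bigcap_{M}\, g\bigl(\Ann_{A}(M)\bigr) \;=\; \bigcap_{M}\, \Ann_{A}(M^{g^{-1}}) \;=\; \bigcap_{M'}\,\Ann_{A}(M') \;=\; I,
\]
where in the penultimate step I reindex by $M':=M^{g^{-1}}$, using the bijectivity noted above. This shows $g(I)=I$ for every $g\in G$, so $I$ is a $G$-subrepresentation of $A$ (in fact a $G$-stable ideal).

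No step looks like a genuine obstacle: the only thing to be slightly careful about is verifying that the twist $M^{g}$ truly is a point module (cyclicity, Hilbert series, graded structure), and that the twisting is compatible enough with $g$ to turn the annihilator intersection back into itself. Both of these are formal once one writes down the definition of $M^{g}$ correctly, and the argument is independent of any specific structure of Sklyanin algebras, applying to any connected graded algebra with a group acting by graded automorphisms.
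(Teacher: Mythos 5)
Your proposal is correct and follows essentially the same route as the paper: both define the twisted module $M^{g}$ with action $m\cdot g(a)$, observe that twisting permutes the point modules, and deduce that the intersection of annihilators is $G$-stable. Your version merely makes the reindexing step $g(I)=\bigcap_{M}\Ann_{A}(M^{g^{-1}})=I$ more explicit than the paper's one-line computation.
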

\begin{proof}
Let $M = \oplus_{n\in \N} M_n$ be a point module. For $g \in G$, define a new point module $M^g$ with underlying vector space $M$ but with $A$-action
$m \cdot^g a = m(g\cdot a)$. Then $M^g$ is also a point module of $A$. Consequently, there exists an action of $G$ on the set of point modules of $A$.
\par Let $a \in I$ and let $g \in G$. Let $M$ be a point module and $m \in M$. Then one finds
$$
m(g\cdot a) = m \cdot^g a = 0,
$$
so $g\cdot a \in I$ and consequently, $I$ is a subrepresentation of $A$.
\end{proof}
%\par Recall the definition of a Zhang twist of an algebra $A$ from \cite{zhang1996twisted}.
%\begin{mydef}
%Let $A$ be a connected graded algebra and $\sigma \in \Aut_{gr}(A)$. Then the Zhang twist of $A$ by the twisting system $\{\sigma^{n}\}_{n \in \N}$ is the graded algebra $A^\sigma = \oplus_{n\in \N} A_n$ with the same underlying vector space $A$ and multiplication rule
%$$
%a *^\sigma b = a \sigma^k(b) \text{ for } a \in A_k, b \in A.
%$$
%\end{mydef}
%If $A$ is a connected graded algebra, then $\C^* \subset \Aut_{gr}(A)$. It is easy to see that in this case $A^\lambda \cong A$ for $\lambda \in \C^*$ as algebras.
\begin{proposition}
Let $A$ be a connected graded domain, finitely generated in degree 1 with $\dim A_1 > 1$. Let $G \subset \Aut_{gr}(A)$ be a subgroup such that $A_1$ is a simple representation of $G$. Let $c_k \in A_k$ be a normal element such that $\C c_k$ is a $G$-subrepresentation. Then
\begin{itemize}
\item the automorphism $\sigma \in \Aut_{gr}(A)$ such that $v c_k = c_k \sigma
(v)$ for all $v \in A$ lies in $\C^*$, that is, there exists a $\lambda \in \C^*$ such that $x c_k = \lambda^l c_k x$ for $x \in A_l$, and
\item the element $\lambda$ is a $k$th root of unity.
%\item the twist $\tau \in  \Aut_{gr}(A)$ such that $c_k$ becomes central in $A^\tau$ has order a divisor of $k$ in $\wis{PGL}(A_1)$, and
%\item one can choose $\tau$ such that $\tau(c_k) = \lambda^{-1} c_k$.
\end{itemize}
\label{theorem:main}
\end{proposition}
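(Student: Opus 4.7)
The plan is to show that the twisting automorphism $\sigma$ associated to the normal element $c_k$ commutes with the $G$-action, then invoke Schur's lemma on the simple $G$-representation $A_1$ to force $\sigma|_{A_1}$ to be a scalar, and finally propagate that scalar to all of $A$ using that $A$ is generated in degree $1$.

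More concretely, since $\mathbb{C} c_k$ is a $1$-dimensional $G$-subrepresentation of $A$, the group $G$ acts on $c_k$ by a character $\chi:G \to \mathbb{C}^*$, i.e.\ $g\cdot c_k = \chi(g)\, c_k$ for all $g \in G$. Applying $g \in G$ to the defining identity $v c_k = c_k \sigma(v)$ and using that $G$ acts by algebra automorphisms gives
\[
\chi(g)\,(g\cdot v)\,c_k \;=\; \chi(g)\, c_k\,(g\cdot \sigma(v)),
\]
which, after using the same identity on the left-hand side and cancelling $\chi(g)\,c_k$ (legal because $A$ is a domain and $c_k \neq 0$), yields $\sigma(g\cdot v) = g\cdot \sigma(v)$. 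Thus $\sigma$ is $G$-linear. In particular $\sigma|_{A_1}:A_1\to A_1$ is a $G$-module map, and because $A_1$ is a simple $G$-representation, Schur's lemma furnishes $\lambda \in \mathbb{C}^*$ (nonzero as $\sigma$ is an automorphism) with $\sigma(v) = \lambda v$ for every $v \in A_1$.

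Since $A$ is generated in degree $1$, any $x \in A_l$ is a sum of products of $l$ elements of $A_1$, and the multiplicativity of $\sigma$ gives $\sigma(x) = \lambda^l x$. Substituting back, $x c_k = c_k \sigma(x) = \lambda^l c_k x$ for all $x \in A_l$, which is the first claim. For the second claim, specialise to $x = c_k$ itself: since $c_k \in A_k$, we get $c_k^2 = \lambda^k c_k^2$, and because $A$ is a domain with $c_k \neq 0$ we have $c_k^2 \neq 0$, forcing $\lambda^k = 1$.

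There is no serious obstacle; the only subtlety to flag is the cancellation of $c_k$ in the $G$-equivariance step, which relies crucially on $A$ being a domain, and the use of the degree-$1$ generation hypothesis when extending the scalar $\lambda$ from $A_1$ to every graded component. Both are part of the hypotheses of the proposition.
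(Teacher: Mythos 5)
Your proof is correct and follows essentially the same route as the paper: establish $\sigma(g\cdot v)=g\cdot\sigma(v)$ by applying $g$ to $vc_k=c_k\sigma(v)$ and cancelling using the domain hypothesis, apply Schur's lemma to the simple module $A_1$ to get $\sigma|_{A_1}=\lambda$, propagate via generation in degree $1$, and use $\sigma(c_k)=c_k$ to conclude $\lambda^k=1$. Your write-up is if anything slightly more explicit than the paper's about where the domain hypothesis and $c_k\neq 0$ are used.
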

\begin{proof}
If $\C c_k$ is a subrepresentation of $A_k$, then necessarily $k>1$ as $A_1$ was simple and $\dim A_1>1$. For $g \in G$, let $g\cdot c_k = \chi(g)c_k$ for a group homomorphism $\chi:\xymatrix{G \ar[r]& \C^*}$.
Let $g \in G$ and $v \in A_1$, then it follows that 
\begin{align*}
g\cdot (vc_k) &= (g\cdot v)(g\cdot c_k)\\
			  &= \chi(g)c_k \sigma(g\cdot v),\\
g\cdot (vc_k) &= g\cdot (c_k\sigma(v)),\\			
						&= \chi(g)c_k g\cdot\sigma(v).			
\end{align*}
As $A$ is a domain, it follows that for each $v \in A_1$ one has $\sigma(g\cdot v) = g\cdot\sigma(v)$.
\par Consequently, $\sigma \in \Aut_{G}(A_1) = \C^*$ and there exists a $\lambda$ such that $\sigma(x) = \lambda^l x$ for $x \in A_l$. As $\sigma(c_k) = c_k$, it follows that $\lambda$ is a $k$th root of unity.
%\par For the third and fourth point, let $\tau \in \Aut_{gr}(A)$ such that $c_k$ becomes central in $A^\tau$. Let $v \in A_1$ be an eigenvector of $\tau$. As twisting with elements of $\C^*$ gives isomorphic algebras, one may assume that $v$ is fixed by $\tau$. It follows that
%\begin{align*}
%v*^\tau c_k &= v \tau(c_k),\\
%c_k*^\tau v &= c_k \tau^k(v)\\
%            &= c_k v\\
%            &= \lambda^{-1} v c_k.
%\end{align*}
%Consequently, $\tau(c_k) = \lambda^{-1} c_k$. Let $y \in A_1$, then it follows that
%\begin{align*}
%y*^\tau c_k &= y \tau(c_k)\\
%            &= \lambda^{-1}y c_k,\\
%c_k*^\tau y &= c_k \tau^k(y)\\
%            &= \lambda^{-1}\tau^k(y) c_k.
%\end{align*}
%Consequently, $\tau^k(y) = y$ for each $y \in A_1$.
\end{proof}

\section{Quadratic Sklyanin algebras of global dimension three}
Recall from \cite{artin1987graded} the definition of the quadratic Sklyanin algebras of global dimension three.
\begin{mydef}
Let $[a:b:c] \in \PP^2$, then the quadratic Sklyanin algebra ${}_3S_{a,b,c}$ is the quotient of $\C\langle x,y,z\rangle$ by the quadratic relations
$$
\begin{cases}
ayz+bzy+cx^2,\\
azx+bxz+cy^2,\\
axy+byx+cz^2,
\end{cases}
$$
on the condition that
$$
\V((abc)(X^3+Y^3+Z^3)-(a^3+b^3+c^3)XYZ)=\V(f_{a,b,c}(X,Y,Z))\subset \PP^2_{[X:Y:Z]}
$$
is a smooth curve of genus 1.
\end{mydef}
Let ${}_3R_{a,b,c}$ be the vector space generated by these three equations for $[a:b:c] \in \PP^2$. By taking $O=[1:-1:0]$, $\V(f_{a,b,c}(X,Y,Z))$ obtains the structure of an elliptic curve. Let ${}_3U \subset \PP^2$ be the open subset consisting of points $[a:b:c]$ such that $\V(f_{a,b,c}(X,Y,Z))$ is an elliptic curve.
\par Notice that, if one writes $\C\langle x,y,z\rangle = T({}_3V_1)$ with ${}_3V_1$ the simple $H_3$-module defined above, then the relations of ${}_3S_{a,b,c}$ form an $H_3$-submodule of ${}_3V_1 \otimes {}_3V_1\cong {}_3V_2^{\oplus 3}$. Consequently, $H_3$ acts on ${}_3S_{a,b,c}$ and preserves the gradation.
\par In fact, each submodule of ${}_3V_1 \otimes {}_3V_1$ isomorphic to ${}_3V_1 \wedge {}_3V_1 \cong {}_3V_2$ is determined by a point $[a:b:c] \in \PP^2$ by Schur's lemma, for there are the isomorphisms of varieties
\begin{align*}
\Emb_{H_3}({}_3V_2,{}_3V_2^{\oplus 3}) &= \{ W \subset {}_3V_2^{\oplus 3}: W \cong {}_3V_2 \text{ as } H_3-\text{representation}\}\\
&\cong\Hom^f_{H_3}({}_3V_2,{}_3V_2^{\oplus 3})/\Aut_{H_3}({}_3V_2)\\
&\cong\Hom^f(\C,\C^3)/(\C^*) \cong \PP^2, 
\end{align*}
with $\Hom^f$ and $\Hom^f_G$ being the linear maps, respectively the $G$-linear maps of full rank for $G$ a reductive group. As $\V(f_{a,b,c}(X,Y,Z))$ is generically an elliptic curve, one sees that there is an open subset $U \subset \Emb_{H_3}({}_3V_2,{}_3V_2^{\oplus 3}) \cong \PP^2$ parametrizing Sklyanin algebras.
\par Notice however that there are values of $[a:b:c] \in \PP^2$ such that $\V(f_{a,b,c}(X,Y,Z))$ is not smooth, but ${}_3S_{a,b,c}$ is still an Artin-Schelter regular algebra with Hilbert series $(1-t)^{-3}$. For example, the point $[1:-1:0]$ corresponds to the polynomial ring $\C[x,y,z]$, which is the standard example of an Artin-Schelter regular ring.
\subsection{The point modules} The elliptic curve ${}_3E_{a,b,c}=\V(f_{a,b,c}(X,Y,Z))$ determines the point modules of ${}_3S_{a,b,c}$. In fact, for quadratic Sklyanin algebras of any global dimension not equal to four, it follows from \cite{artin1990some} and \cite{smith1994point} that if $M$ is a point module, then $M$ is completely determined by a point $p_0=p \in E$ and there exists a point $\tau \in E$ such that for each $i\in \N$, $p_i=p+i\tau$. Consequently, $M[i]$ corresponds to the point $p+i\tau$.
\par In the case of global dimension three, things can be made more explicit: from the defining definitions of ${}_3 S_{a,b,c}$ it follows that for a point module $M_p$ the couple $([x_0:y_0:z_0],[x_1:y_1:z_1])$ fulfils the matrix equation
$$
\begin{bmatrix}
cx_0&bz_0&ay_0\\az_0&cy_0&bx_0\\by_0&ax_0&cz_0
\end{bmatrix}
\begin{bmatrix}
x_1\\y_1\\z_1
\end{bmatrix}=0.
$$
As such, $[x_1:y_1:z_1]$ can be calculated by taking the cross product of 2 of the 3 rows of the first matrix. Calculating this for the point $O = [1:-1:0]$, one sees that $\tau = [a:b:c]$.
\subsection{The center} Let ${}_3I_{a,b,c} = \cap_{p\in {}_3E_{a,b,c}}\Ann_{{}_3S_{a,b,c}}(M_p)$, then it was proved in \cite{artin1990some} that ${}_3I_{a,b,c}$ is generated by a normal element  ${}_3c_{a,b,c}$ of degree 3 such that ${}_3 S_{a,b,c}/(_{3}c_{a,b,c}) \cong {}_3\mathcal{O}_\tau({}_3E_{a,b,c})$, the twisted homogeneous coordinate ring associated to $E$ and the automorphism defined by addition with $\tau$.
\begin{lemma}
The vector space $\C({}_3c_{a,b,c})$ is an $H_3$-subrepresentation of ${}_3S_{a,b,c}$.
\label{lemma:subrep3}
\end{lemma}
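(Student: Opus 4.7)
The plan is to combine Proposition \ref{prop:annpoint} with the normality of ${}_3c_{a,b,c}$ and the fact that its degree is $3$. The key point is that the ideal ${}_3I_{a,b,c}$ is already known to be an $H_3$-subrepresentation of ${}_3S_{a,b,c}$; it then only remains to identify its degree $3$ component with $\mathbb{C}\cdot {}_3c_{a,b,c}$.

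First, apply Proposition \ref{prop:annpoint} with $A = {}_3S_{a,b,c}$ and $G = H_3$. By the discussion above, $H_3$ acts on ${}_3S_{a,b,c}$ by graded automorphisms since the relations of ${}_3S_{a,b,c}$ form an $H_3$-submodule of ${}_3V_1 \otimes {}_3V_1$. In particular $H_3 \subset \Aut_{gr}({}_3S_{a,b,c})$, and so Proposition \ref{prop:annpoint} gives that
\[
{}_3I_{a,b,c} = \bigcap_{p \in {}_3E_{a,b,c}} \Ann_{{}_3S_{a,b,c}}(M_p)
\]
is an $H_3$-subrepresentation of ${}_3S_{a,b,c}$. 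Since the $H_3$-action preserves the gradation, each homogeneous component $({}_3I_{a,b,c})_k$ is an $H_3$-subrepresentation of $({}_3S_{a,b,c})_k$.

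Next, observe that because ${}_3c_{a,b,c}$ is normal of degree $3$ and generates ${}_3I_{a,b,c}$ as a two-sided ideal, one has ${}_3I_{a,b,c} = {}_3c_{a,b,c}\cdot {}_3S_{a,b,c} = {}_3S_{a,b,c}\cdot {}_3c_{a,b,c}$. Restricting to degree $3$ and using $({}_3S_{a,b,c})_0 = \mathbb{C}$ gives
\[
({}_3I_{a,b,c})_3 = {}_3c_{a,b,c}\cdot({}_3S_{a,b,c})_0 = \mathbb{C}\cdot {}_3c_{a,b,c}.
\]
Combining the last two paragraphs, $\mathbb{C}\cdot {}_3c_{a,b,c}$ is an $H_3$-subrepresentation of $({}_3S_{a,b,c})_3$, which is what was to be shown.

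There is no real obstacle here: the lemma is essentially a formal consequence of Proposition \ref{prop:annpoint} together with the existing result from \cite{artin1990some} that ${}_3I_{a,b,c} = ({}_3c_{a,b,c})$ with ${}_3c_{a,b,c}$ normal of degree $3$. The only thing to verify carefully is that the $H_3$-action really does preserve ${}_3S_{a,b,c}$, which has already been established in the discussion preceding the lemma.
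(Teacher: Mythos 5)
Your proof is correct and follows essentially the same route as the paper: apply Proposition \ref{prop:annpoint} to conclude that ${}_3I_{a,b,c}$ is an $H_3$-subrepresentation, then pass to its degree-$3$ component, which equals $\C({}_3c_{a,b,c})$ because ${}_3c_{a,b,c}$ is a normal generator of degree $3$. The only point the paper makes explicit that you leave implicit is that $\{M_p : p \in {}_3E_{a,b,c}\}$ is the \emph{entire} set of point modules of ${}_3S_{a,b,c}$, which is what identifies ${}_3I_{a,b,c}$ with the intersection over all point modules appearing in Proposition \ref{prop:annpoint}.
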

\begin{proof}
The conditions of Proposition \ref{prop:annpoint} are fulfilled, as the set
$$
\{M_p: p \in {}_3E_{a,b,c}\}
$$
is the entire set of point modules of ${}_3S_{a,b,c}$.
\end{proof}
In fact, one can explicitly determine the $H_3$-representation $\C({}_3c_{a,b,c})$.
\begin{proposition}
The vector space $\C{}_3c_{a,b,c}$ is isomorphic to the trivial representation of $H_3$. In addition, taking the basis
$$
\begin{cases}
f_1 = zxy+xyz+yzx,\\
f_2 = yxz+zyx+xzy,\\
f_3 = x^3+y^3+z^3
\end{cases}
$$
of $({}_3V_1^{\otimes 3})^{H_3}$, then ${}_3c_{a,b,c}$ is the image of the element
$$
\alpha f_1+ \beta f_2 + \gamma f_3
$$
with $[\alpha:\beta:\gamma]= -2\tau$ by the addition law on ${}_3E_{a,b,c}$.
\label{prop:descriptionnormal}
\end{proposition}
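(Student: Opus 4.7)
The proof splits into two parts: showing that $\C\,{}_3c_{a,b,c}$ carries the trivial $H_3$-character, and then pinning down its coefficients in the invariant basis $\{f_1,f_2,f_3\}$. The strategy for the first part is to compute trivial-isotypic multiplicities on either side of the short exact sequence of $H_3$-modules
\begin{equation*}
0 \longrightarrow \C\,{}_3c_{a,b,c} \longrightarrow ({}_3S_{a,b,c})_3 \longrightarrow H^0({}_3E_{a,b,c},\mathcal{L}_3) \longrightarrow 0,
\end{equation*}
where $\mathcal{L}_3 := \mathcal{L}(3O) \otimes t_\tau^*\mathcal{L}(3O) \otimes t_{2\tau}^*\mathcal{L}(3O)$ is the degree-$9$ line bundle controlling the degree-$3$ component of the twisted homogeneous coordinate ring ${}_3\mathcal{O}_\tau({}_3E_{a,b,c})$.

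For the middle term, ${}_3V_1^{\otimes 3} \cong \bigoplus_{i,j} {}_3\chi_{i,j}^{\oplus 3}$ and the $3$-dimensional trivial isotypic is spanned by $f_1,f_2,f_3$. Since $R_{a,b,c} \cong {}_3V_2$, each of $R_{a,b,c} \otimes {}_3V_1$ and ${}_3V_1 \otimes R_{a,b,c}$ contains the trivial representation exactly once, and a direct calculation shows that both one-dimensional trivial components coincide as the line spanned by $af_1 + bf_2 + cf_3$ inside ${}_3V_1^{\otimes 3}$. Consequently the degree-$3$ relation space contributes a $1$-dimensional trivial isotypic and $({}_3S_{a,b,c})_3$ has trivial multiplicity $3-1 = 2$. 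For the right term, since the Schur class of $\mathcal{L}(3O)$ in $H^2(E[3],\C^*) \cong \Z_3$ has order $3$, the Schur class of $\mathcal{L}_3$ is its trivial cube, so $\mathcal{L}_3 \cong [3]^*\mathcal{M}$ for a degree-$1$ line bundle $\mathcal{M}$ on $E$; the projection formula then yields $H^0(E, \mathcal{L}_3) \cong \bigoplus_{i,j} {}_3\chi_{i,j}$ as $H_3/Z(H_3)$-module, each character with multiplicity one. Splitting the sequence over the finite group $H_3$ and comparing trivial multiplicities gives $2 = \dim (\C\,{}_3c_{a,b,c})^{H_3} + 1$, so $\C\,{}_3c_{a,b,c}$ must be the trivial representation.

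Having established triviality, any lift of ${}_3c_{a,b,c}$ to $({}_3V_1^{\otimes 3})^{H_3} = \C f_1 + \C f_2 + \C f_3$ has the form $\alpha f_1 + \beta f_2 + \gamma f_3$, with $[\alpha:\beta:\gamma] \in \PP^2$ well-defined modulo $\C(a,b,c)$. The annihilator characterisation ${}_3c_{a,b,c} \in \cap_{p \in {}_3E_{a,b,c}} \Ann(M_p)$ translates, via the point-module multilinearisation, into the identity
\begin{equation*}
\alpha\, f_1^{multi}(p,p+\tau,p+2\tau) + \beta\, f_2^{multi}(p,p+\tau,p+2\tau) + \gamma\, f_3^{multi}(p,p+\tau,p+2\tau) = 0\quad \text{for all }p \in E,
\end{equation*}
the coordinates of ${}_3E_{a,b,c} \subset \PP^2$ being realised as level-$3$ theta sections of $\mathcal{L}(3O)$. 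Expanding each $f_i^{multi}$ at the twisted diagonal via the classical theta addition formulas rewrites the left-hand side as a single section of a degree-$9$ line bundle on $E$, whose identical vanishing pins $[\alpha:\beta:\gamma]$ to lie on the tangent line to ${}_3E_{a,b,c}$ at $\tau = [a:b:c]$; the group-law identity $\tau+\tau+(-2\tau) = O$ then identifies the remaining intersection of this tangent with the cubic as $-2\tau$, as required.

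The principal obstacle is the theta-function calculation in the final step: organising each $f_i^{multi}(p,p+\tau,p+2\tau)$ in a common frame so that the group-law conclusion $-2\tau$ emerges cleanly from the vanishing of one theta section. The trivial-isotypic bookkeeping in the first part is essentially routine once the descent $\mathcal{L}_3 = [3]^*\mathcal{M}$ has been identified via the triviality of its Schur class, which is the one nonformal input there.
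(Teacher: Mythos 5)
Your argument for the first claim (that $\C\,{}_3c_{a,b,c}$ is trivial) takes a genuinely different route from the paper: the paper never passes through the twisted homogeneous coordinate ring, whereas you count trivial multiplicities along $0\to\C\,{}_3c_{a,b,c}\to({}_3S_{a,b,c})_3\to H^0({}_3E_{a,b,c},\mathcal{L}_3)\to 0$. The bookkeeping on the left and middle terms agrees with what the paper itself uses ($\dim(R_{a,b,c}\otimes{}_3V_1+{}_3V_1\otimes R_{a,b,c})^{H_3}=1$), and the conclusion is insensitive to the choice of splitting of $E[3]$ into the theta group because all nine characters occur with multiplicity one. The one step you should not treat as routine is the identification of the $H_3$-action on $B_3$ \emph{induced from the algebra} with the theta-group action on $H^0(E,\mathcal{L}_3)$ coming from the descent $\mathcal{L}_3\cong[3]^*\mathcal{M}$; the equivariance of the multiplication maps $H^0(\mathcal{L})\otimes H^0(t_\tau^*\mathcal{L})\otimes H^0(t_{2\tau}^*\mathcal{L})\to H^0(\mathcal{L}_3)$ with the chosen lifts is true but needs to be stated and justified. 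With that caveat, this half is a valid (if heavier) alternative to the paper's argument.

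The second claim is where there is a genuine gap. The entire content of $[\alpha:\beta:\gamma]=-2\tau$ is delegated to an unperformed theta-function expansion, and you flag that computation yourself as the principal obstacle. Nothing in the proposal explains \emph{why} the identical vanishing of $\alpha f_1^{multi}+\beta f_2^{multi}+\gamma f_3^{multi}$ on the twisted diagonal forces the collinearity $\tau+\tau+[\alpha:\beta:\gamma]=O$; asserting that the answer is the third intersection of the tangent line at $\tau$ is a restatement of the proposition, not a derivation. The paper avoids theta functions entirely: since $\pi_{0,2}(\mathcal{E}_{a,b,c})$ consists of pairs $(P,R)$ with $R-P=2\tau$, the three Sklyanin-type bilinear forms in $(x_0,y_0,z_0)$ and $(x_2,y_2,z_2)$ attached to the parameter point $2\tau$ (written with coefficients $[\alpha:\beta:\gamma]=-2\tau$ after the coordinate swap implementing inversion on the Hesse cubic) vanish on it, and contracting them with $x_1,y_1,z_1$ produces exactly $\alpha f_1+\beta f_2+\gamma f_3$ inside $\mathbf{I}(\mathcal{E}_{a,b,c})$; your own multiplicity count then shows this element is not in $(R_{a,b,c})_3$ when $3\tau\neq O$, hence is a nonzero multiple of ${}_3c_{a,b,c}$. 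That single construction delivers both halves of the proposition at once; to complete your write-up you must either carry out the theta computation in full or replace your final step by an argument of this kind.
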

\begin{proof}
Consider the variety
$$
\mathcal{E}_{a,b,c} = \{(P,Q,R) \in {}_3E_{a,b,c}^3:Q-P=R-Q=\tau\} \subset \PP^2_{[x_0:y_0:z_0]}\times \PP^2_{[x_1:y_1:z_1]}\times \PP^2_{[x_2:y_2:z_2]},
$$
which is isomorphic to ${}_3E_{a,b,c}$. Take the multi-linearizations $\{f_1^{multi},f_2^{multi},f_3^{multi}\}$. Then $$
A f_1+ B f_2 + C f_3 \in (R_{a,b,c},{}_3c_{a,b,c}))  \Leftrightarrow A f^{multi}_1+ B f^{multi}_2 + C f^{multi}_3 \in \mathbf{I}(\mathcal{E}_{a,b,c}),
$$
with $\mathbf{I}(\mathcal{E}_{a,b,c})$ the ideal of $\C[x_0,y_0,z_0,x_1,y_1,z_1,x_2,y_2,z_2]$ of all polynomials vanishing on $\mathcal{E}_{a,b,c}$.
\par Consider the projection to the first and the third factor of $\mathcal{E}_{a,b,c}$ 
$$\pi_{0,2}(\mathcal{E}_{a,b,c})=\{(P,R) \in {}_3E_{a,b,c}^3:R-P=2\tau\} \subset \PP^2_{[x_0:y_0:z_0]} \times \PP^2_{[x_2:y_2:z_2]}.$$
Consequently, if $[\alpha:\beta:\gamma]= -2\tau$, then the following 3 functions belong to $\mathbf{I}(\mathcal{E}_{a,b,c})$:
$$
\begin{cases}
\beta y_0 z_2 + \alpha z_0 y_2 + \gamma x_0x_2,\\
\beta z_0 x_2 + \alpha x_0 z_2 + \gamma y_0y_2,\\
\beta x_0 y_2 + \alpha y_0 x_2 + \gamma z_0z_2.
\end{cases}
$$
Consequently, the following element belongs to $\mathbf{I}(\mathcal{E}_{a,b,c})$:
$$
(\beta y_0 z_2 + \alpha z_0 y_2 + \gamma x_0x_2)x_1+(\beta z_0 x_2 + \alpha x_0 z_2 + \gamma y_0y_2)y_1 + (\beta x_0 y_2 + \alpha y_0 x_2 + \gamma z_0z_2)z_1,
$$
which leads to the claimed element $\alpha f_1 + \beta f_2 +\gamma f_3$. One still has to check that $\alpha f_1 + \beta f_2 +\gamma f_3 \not\in (R_{a,b,c})$.
\par By the results of \cite{artin1990some}, $R_{a,b,c}\otimes {}_3V_1 + {}_3V_1\otimes R_{a,b,c}$ is 17-dimensional and 
$$R_{a,b,c}\otimes {}_3V_1 \cap {}_3V_1\otimes R_{a,b,c}=\C (a f_1 + b f_2 + c f_3).$$
As ${}_3V_1 \otimes {}_3V_2 \cong \oplus_{i,j=0}^2 {}_3\chi_{i,j}$, it follows that 
$$
\dim (R_{a,b,c}\otimes {}_3V_1 + {}_3V_1\otimes R_{a,b,c})^{H_3}=1.	
$$
Consequently, $\alpha f_1 + \beta f_2 +\gamma f_3 \not\in (R_{a,b,c})$ if $[a:b:c] \neq [\alpha:\beta:\gamma]$, which is equivalent to saying $3\tau \neq O$. But if $\tau$ is of order 1 or 3, then ${}_3S_{a,b,c}$ is not a Sklyanin algebra.
\end{proof}

\begin{theorem}
The normal element ${}_3c_{a,b,c}$ is a central element of ${}_3S_{a,b,c}$.
\label{th:3dimcentral}
\end{theorem}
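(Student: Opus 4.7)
The plan is to apply Proposition \ref{theorem:main} with $G = H_3$ and then pin down the resulting root of unity. To start, I verify the hypotheses: $A = {}_3S_{a,b,c}$ is a connected graded noetherian domain, $A_1 = {}_3V_1$ is simple as an $H_3$-representation (see Section 2.2), and $\C\cdot{}_3c_{a,b,c}$ is an $H_3$-subrepresentation by Lemma \ref{lemma:subrep3}. Proposition \ref{theorem:main} therefore produces $\lambda\in\C^*$ with $\lambda^3=1$ satisfying $x\cdot{}_3c_{a,b,c} = \lambda^l\,{}_3c_{a,b,c}\cdot x$ for all $x\in A_l$.

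The remaining task is to show $\lambda=1$; this is the main obstacle, since Proposition \ref{theorem:main} leaves open the two primitive cube roots of unity. My plan is a parameter-constancy argument on the irreducible variety ${}_3U\subset\PP^2$. The description in Proposition \ref{prop:descriptionnormal} of ${}_3c_{a,b,c}$ as $\alpha f_1+\beta f_2+\gamma f_3$ with $[\alpha:\beta:\gamma]=-2\tau$ depends algebraically on $[a:b:c]$, so the scalar $\lambda$ — obtained as a ratio of coefficients when $x\cdot{}_3c_{a,b,c}$ and ${}_3c_{a,b,c}\cdot x$ are expanded in a fixed basis of $A_4$ after reduction by the Sklyanin relations — is a regular function from ${}_3U$ to the discrete set $\{1,\omega,\omega^2\}$. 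Since ${}_3U$ is irreducible, $\lambda$ is constant on ${}_3U$.

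To identify this constant, I would specialize to a convenient parameter. One option is to fix a Sklyanin algebra with $\tau$ a $4$-torsion point of ${}_3E_{a,b,c}$, so that $-2\tau$ is nontrivial $2$-torsion and the defining formula for ${}_3c_{a,b,c}$ simplifies, and verify $[x,\,{}_3c_{a,b,c}]=0$ by a finite computation in $A_4$. An alternative is to use a flat degeneration towards the polynomial ring at $[1:-1:0]$, together with a suitable rescaling of ${}_3c_{a,b,c}$ so that its degenerated image remains nonzero; in that case, centrality of the limit element in the commutative algebra forces the limiting $\lambda$ to be $1$. Either specialization, combined with the constancy argument, yields $\lambda=1$ throughout ${}_3U$ and hence the desired centrality of ${}_3c_{a,b,c}$.
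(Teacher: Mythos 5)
Your proof has the same skeleton as the paper's: apply Proposition \ref{theorem:main} to get $\lambda^3=1$, observe that $[a:b:c]\mapsto\lambda_{a,b,c}$ is a morphism from the irreducible variety ${}_3U$ to a discrete set and hence constant, and then evaluate the constant at one well-chosen specialization. The only divergence is the endgame. The paper specializes to $\tau$ of order two, where ${}_3S_{a,b,c}$ becomes a Clifford algebra of rank $3$ over $\C[x^2,y^2,z^2]$, and cites Le Bruyn's result that the point modules of such a Clifford algebra are annihilated by a \emph{central} element of degree $3$; this forces $\lambda=1$ with no computation. Your first option (an explicit commutator check at a single $4$-torsion parameter) is logically sufficient but reintroduces exactly the kind of brute-force verification the paper is trying to replace. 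Your second option needs more care than you give it: $[1:-1:0]$ does \emph{not} lie in ${}_3U$ (there $f_{a,b,c}\equiv 0$ and $\tau=O$), so constancy on ${}_3U$ does not by itself reach that point, and moreover the canonical presentation $\alpha f_1+\beta f_2+\gamma f_3$ with $[\alpha:\beta:\gamma]=-2\tau$ tends to $f_1-f_2$, whose image in $\C[x,y,z]$ is $3xyz-3xyz=0$; so the rescaling you mention is genuinely necessary, and you must additionally check that the rescaled limit is nonzero \emph{in the quotient algebra} (the space of relations is varying as well), not merely in $({}_3V_1)^{\otimes 3}$. Neither issue is fatal, but the paper's $2$-torsion specialization sidesteps both.
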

\begin{proof}
The conditions of Theorem \ref{theorem:main} are fulfilled: 
\begin{itemize}
\item the algebra ${}_3S_{a,b,c}$ is a graded domain, finitely generated in degree 1 with $H_3 \subset \Aut_{gr}({}_3S_{a,b,c})$ such that $({}_3S_{a,b,c})_1 = {}_3V_1$ is a simple representation of $H_3$, and
\item the element ${}_3c_{a,b,c}$ is an $H_3$-subrepresentation of ${}_3S_{a,b,c}$.
\end{itemize}
Consequently, one can define the function
$$
\lambda: \xymatrix{{}_3 U \ar[r]& \Z_3}, [a:b:c] \mapsto \lambda_{a,b,c},
$$
with $\lambda_{a,b,c}$ the third root of unity such that $v{}_3c_{a,b,c} = \lambda_{a,b,c}{}_3c_{a,b,c}v$ for all $v \in {}_3 V_1$.
\par This function is a morphism as there exists a presentation of ${}_3c_{a,b,c}$ in $({}_3V_1)^{\otimes 3}$ that is continuous and never becomes 0 in ${}_3S_{a,b,c}$. As ${}_3U$ is irreducible, this function is constant. If one takes now a Sklyanin algebra associated to a point of order two, one gets a Clifford algebra of rank 3 over the polynomial ring $\C[x^2,y^2,z^2]$. Consequently, the point modules of such a Clifford algebra are annihilated by a central element of degree 3 (see \cite{lebruyn1995central}). This implies that on $\V(a-b)\cap U$, $\lambda([a:a:c])=1$, so $\lambda$ is the constant 1-function.
\end{proof}
\section{Cubic Sklyanin algebras of global dimension three}
Again from \cite{artin1987graded}, recall the definition of the cubic Sklyanin algebras of global dimension three.
\begin{mydef}
Let $[a:b:c] \in \PP^2$, then the cubic Sklyanin algebra ${}_2S_{a,b,c}$ is the quotient of $\C \langle x,y \rangle$ by the relations
$$
\begin{cases}
a(y^2x+xy^2)+byxy+cx^3,\\
a(x^2y+yx^2)+bxyx+cy^3,
\end{cases}
$$
on the condition that 
$$
{}_2E_{a,b,c}=\V((b^2-c^2)x_0y_0x_1y_1+ab(x_0^2y_1^2+y_0^2x_1^2)-ac(x_0^2x_1^2+y_0^2y_1^2))\subset \PP^1_{[x_0:y_0]}\times \PP^1_{[x_1:y_1]}
$$
is a smooth curve of genus 1.
\end{mydef}
Let ${}_2R_{a,b,c}$ be the vector space generated by these relations. As in the quadratic case, the Heisenberg group $H_2$ of order 8 acts on ${}_2S_{a,b,c}$ as gradation preserving automorphisms such that $({}_2S_{a,b,c})_1={}_2 V$ and $R_{a,b,c} \cong {}_2V$. However, while in the quadratic case the generic embedding of ${}_3R_{a,b,c}$ in ${}_3V_1 \otimes {}_3V_1$ corresponds to a Sklyanin algebra, this is no longer true in the cubic case. In fact, one has ${}_2V^{\otimes 3} \cong {}_2V^{\oplus 4}$, so that 
$$
\Emb_{H_2}({}_2V,{}_2V^{\oplus 4}) \cong \PP^3,
$$
but the generic element of this variety corresponds to an algebra with a degree 4 vector space of dimension 8 instead of 9, so the generic element can not correspond to an Artin-Schelter regular algebra by \cite{artin1987graded}.
\par As $[e_1,e_2]$ acts by multiplication with $-1$ on ${}_2 V$, the second Veronese subalgebra ${}_2S_{a,b,c}^{(2)}$ is an $H_2/\langle [e_1,e_2]\rangle\cong \Z_2 \times \Z_2 = \VV$-module.
As such, it is easy to see that ${}_2 V \otimes {}_2V$ decomposes as
$$
{}_2V \otimes {}_2V = \C(x^2+y^2) \oplus \C(x^2-y^2)\oplus \C(xy+yx)\oplus \C(xy-yx).
$$
Let $w_{00} = x^2+y^2$, $w_{10}=x^2-y^2$, $w_{01}=xy+yx$ and $w_{11}=xy-yx$, then  one finds
$$
w_{ij}\cong {}_2\chi_{i,j} \text{ for } (i,j) \in \Z_2 \times \Z_2.
$$
\par Define 
\begin{align*}
\gamma(x)&=a(y^2x+xy^2)+byxy+cx^3,\\
\gamma(y)&=a(x^2y+yx^2)+bxyx+cy^3,
\end{align*}
then the linear extension of $\gamma$ from $x$, $y$ to a linear function from ${}_2V$ to ${}_2V^{\otimes 3}$ is $H_2$-linear.
\par Writing $\gamma(x)$ and $\gamma(y)$ as elements of ${}_2V^{\otimes 2} \otimes {}_2V$ and ${}_2V \otimes {}_2V^{\otimes 2}$, one finds
\begin{align*}
2\gamma(x)&=(a+c)w_{00}x+(c-a)w_{10}x+(a+b)w_{01}y+(a-b)w_{11}y\\&=(a+c)xw_{00}+(c-a)xw_{10}+(a+b)yw_{01}+(b-a)yw_{11},\\
2\gamma(y)&=(a+c)w_{00}y+(a-c)w_{10}y+(a+b)w_{01}x+(b-a)w_{11}x\\&=(a+c)yw_{00}+(a-c)yw_{10}+(a+b)xw_{01}+(a-b)xw_{11}.
\end{align*}

\subsection{The point modules}
As in the quadratic case, the elliptic curve ${}_2E_{a,b,c}$ associated to ${}_2S_{a,b,c}$ is defined by taking the multi-linearization of the relations of ${}_2S_{a,b,c}$ and writing these as a matrix equation
$$
\begin{bmatrix}
ay_0y_1+cx_0x_1 & ax_0y_1 + b y_0x_1\\
ay_0x_1+bx_0y_1 & ax_0x_1+cy_1y_1
\end{bmatrix}
\begin{bmatrix}
x_2 \\ y_2
\end{bmatrix}=0.
$$
Consequently, the point variety of ${}_2S_{a,b,c}$ is determined by the determinant of this $2 \times 2$-matrix as a subvariety of $\PP^1_{[x_0:y_0]}\times \PP^1_{[x_1:y_1]}$, which one can check is an elliptic curve.
\par However, ${}_2S_{a,b,c}$ can be an Artin-Schelter regular algebra although it is not necessarily a Sklyanin algebra, for example for the point $[1:-2:0]$ one gets the relations of $U(\mathfrak{h})$, with $\mathfrak{h}$ the 3-dimensional Heisenberg Lie algebra $\C x + \C y + \C z$ with Lie bracket
$$
[x,y]=z,\quad [x,z]=0, \quad [y,z]=0.
$$
\subsection{The center}
By \cite{artin1990some}, there exists a degree 4 normal element ${}_2c_{a,b,c}$, such that $\cap_{p\in {}_2E_{a,b,c}}\Ann_{{}_2 S_{a,b,c}}(M_p) = ({}_2c_{a,b,c})$.
\begin{lemma}
The vector space $\C({}_2c_{a,b,c})$ is an $H_2$-subrepresentation of ${}_2S_{a,b,c}$.
\end{lemma}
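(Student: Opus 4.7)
The proof will run exactly parallel to that of Lemma \ref{lemma:subrep3}: the plan is to apply Proposition \ref{prop:annpoint} to produce an $H_2$-stable graded ideal and then extract its unique nonzero homogeneous component in the bottom degree.

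First I verify the hypotheses of Proposition \ref{prop:annpoint}. The group $H_2$ acts on ${}_2S_{a,b,c}$ by gradation preserving automorphisms because, as was recorded at the beginning of this section, the space of defining relations ${}_2R_{a,b,c}$ is an $H_2$-submodule of ${}_2V \otimes {}_2V$ (isomorphic to ${}_2V$). Secondly, the point variety of ${}_2S_{a,b,c}$, computed above as the vanishing locus of the determinant of the multi-linearized matrix, is precisely ${}_2E_{a,b,c}$, so $\{M_p : p \in {}_2E_{a,b,c}\}$ is the entire set of point modules of ${}_2S_{a,b,c}$. Proposition \ref{prop:annpoint} therefore yields that the ideal
$$
({}_2c_{a,b,c}) \;=\; \bigcap_{p \in {}_2E_{a,b,c}} \Ann_{{}_2S_{a,b,c}}(M_p)
$$
is an $H_2$-subrepresentation of ${}_2S_{a,b,c}$.

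To descend from this ideal to the one-dimensional subspace $\C({}_2c_{a,b,c})$ itself, I invoke that the $H_2$-action preserves the grading. Since ${}_2c_{a,b,c}$ is normal of degree $4$ and generates the ideal $({}_2c_{a,b,c})$, the graded component $({}_2c_{a,b,c})_4$ is exactly $\C \cdot {}_2c_{a,b,c}$; as a one-dimensional graded piece of an $H_2$-stable ideal, it is itself an $H_2$-subrepresentation of $({}_2S_{a,b,c})_4$. I do not anticipate any genuine obstacle, since the argument is formally identical to that of Lemma \ref{lemma:subrep3}; the only substantive input is the identification of ${}_2E_{a,b,c}$ with the point variety of ${}_2S_{a,b,c}$, which is already furnished by the matrix calculation immediately preceding the statement.
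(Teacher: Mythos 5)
Your proof is correct and follows exactly the route the paper takes: invoke Proposition \ref{prop:annpoint} to see that $\bigcap_{p}\Ann(M_p)=({}_2c_{a,b,c})$ is $H_2$-stable, then pass to its one-dimensional bottom graded component. The paper's own proof is just the one-line remark that the hypotheses of Proposition \ref{prop:annpoint} are again satisfied; you have merely filled in the same details more explicitly.
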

\begin{proof}
Again, the conditions of Proposition \ref{prop:annpoint} are fulfilled.
\end{proof}
In fact, as in the 3-dimensional case, $\C({}_2c_{a,b,c})$ is the trivial representation of $H_2$. The proof of this fact will follow from the $\VV$-equivariant algebra map that will be constructed in Theorem \ref{th:cubic4dim}
$$
\pi_{a,b,c}:\xymatrix{{}_4S_{\alpha_1,\alpha_2,\alpha_3}\ar[r]&{}_2S_{a,b,c}^{(2)}}.
$$
Using Theorem \ref{th:cubic4dim}, which will be proved later, one can show that
a presentation of ${}_2c_{a,b,c}$ as an element of ${}_2V^{\otimes 4}$ is given by
\begin{align*}
{}_2c_{a,b,c}=&b(a^2-c^2)((xy)^2+(yx)^2)+a(b^2-a^2)(yx^2y+xy^2x)+\\&a(c^2-a^2)(x^2y^2+y^2x^2)+c(a^2-b^2)(x^4+y^4),
\end{align*}
which is (up to a scalar) the same element found in \cite{artin1987graded}. Unfortunately, at the moment, no intrinsic description of the coefficients of this element is known as in the quadratic case discussed in Proposition \ref{prop:descriptionnormal}.
\par Notice that ${}_2c_{a,b,c} \rightarrow [x,y]^2$ if $[a:b:c]\rightarrow [1:-2:0]$. Let ${}_2U'\subset \PP^2$ be the dense subset parametrizing Artin-Schelter regular algebras, then $[1:-2:0] \in {}_2U'$.
\begin{theorem}
The element ${}_2c_{a,b,c}$ is central in ${}_2 S_{a,b,c}$ if ${}_2 S_{a,b,c}$ is  Artin-Schelter regular.
\end{theorem}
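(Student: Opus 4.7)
The plan is to mimic the argument for Theorem \ref{th:3dimcentral} by applying Proposition \ref{theorem:main} to $A = {}_2S_{a,b,c}$, the group $G = H_2$, and the normal element ${}_2c_{a,b,c}$ of degree $k = 4$. All hypotheses are in place: ${}_2S_{a,b,c}$ is a noetherian graded domain finitely generated in degree one; $H_2$ acts by gradation-preserving automorphisms with $({}_2S_{a,b,c})_1 = {}_2V$ a simple two-dimensional representation; and the preceding lemma shows that $\C \cdot {}_2c_{a,b,c}$ is an $H_2$-subrepresentation. Proposition \ref{theorem:main} then produces a fourth root of unity $\lambda_{a,b,c} \in \C^*$ such that
$$
v \cdot {}_2c_{a,b,c} \;=\; \lambda_{a,b,c}^{\ell}\, {}_2c_{a,b,c} \cdot v \qquad \text{for all } v \in ({}_2S_{a,b,c})_\ell.
$$
It therefore suffices to prove $\lambda_{a,b,c} = 1$ on the whole Artin-Schelter regular locus ${}_2U' \subset \PP^2$.

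To do this I would first argue, exactly as at the end of the proof of Theorem \ref{th:3dimcentral}, that the assignment $[a:b:c] \mapsto \lambda_{a,b,c}$ is a morphism from the irreducible variety ${}_2U'$ to the finite set $\mu_4 \subset \C^*$ of fourth roots of unity, and hence is constant. The continuity is delivered by the explicit presentation of ${}_2c_{a,b,c}$ in ${}_2V^{\otimes 4}$ recorded just before the theorem statement: the coefficients $b(a^2-c^2),\, a(b^2-a^2),\, a(c^2-a^2),\, c(a^2-b^2)$ depend polynomially on $(a,b,c)$, so the defining equation $x\cdot {}_2c_{a,b,c} - \lambda_{a,b,c}\, {}_2c_{a,b,c}\cdot x = 0$ in ${}_2V \otimes {}_2V^{\otimes 4}$ pins $\lambda_{a,b,c}$ down as a regular function, provided the tensor representative of ${}_2c_{a,b,c}$ remains nonzero in ${}_2S_{a,b,c}$ on every point of ${}_2U'$; this is automatic from normality together with the fact that ${}_2c_{a,b,c}$ generates the nontrivial ideal $\cap_p \Ann_{{}_2S_{a,b,c}}(M_p)$.

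To anchor the constant value, I would evaluate at the distinguished point $[1:-2:0] \in {}_2U'$, where ${}_2S_{a,b,c}$ specialises to the enveloping algebra $U(\mathfrak{h})$ of the 3-dimensional Heisenberg Lie algebra with central commutator $z = [x,y]$. A short calculation using $yx = xy - z$ and the centrality of $z$ simplifies the explicit expression for ${}_2c_{1,-2,0}$ to a nonzero scalar multiple of $z^2$; in particular it is central and nonzero, so $\lambda_{1,-2,0} = 1$. By constancy on ${}_2U'$ this forces $\lambda_{a,b,c} \equiv 1$, so ${}_2c_{a,b,c}$ commutes with every element of $({}_2S_{a,b,c})_1$ and, since ${}_2S_{a,b,c}$ is generated in degree one, with all of ${}_2S_{a,b,c}$.

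The only point that needs genuine care, rather than being routine, is the verification that the polynomial family of tensors representing ${}_2c_{a,b,c}$ really is nowhere zero on ${}_2U'$ — equivalently, that the normal element itself is nonzero at every such $[a:b:c]$. This is what makes $\lambda$ a morphism rather than merely a map defined off a proper closed subset; without it the constancy argument on the irreducible ${}_2U'$ would be unjustified. Once this is settled (it follows from the fact that $({}_2c_{a,b,c})$ is a nonzero ideal cutting out the twisted homogeneous coordinate ring throughout ${}_2U'$, together with the explicit degeneration to $-4z^2$ at $[1:-2:0]$), the conclusion is immediate.
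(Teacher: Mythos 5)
Your proposal follows the paper's own argument essentially verbatim: apply Proposition \ref{theorem:main} to get a fourth root of unity $\lambda_{a,b,c}$, observe that $[a:b:c]\mapsto\lambda_{a,b,c}$ is a morphism to a finite set and hence constant on the irreducible AS-regular locus, and anchor the constant at $[1:-2:0]$, where the algebra degenerates to $U(\mathfrak{h})$ and ${}_2c_{1,-2,0}$ becomes the central element $[x,y]^2$. The only cosmetic difference is that the paper sidesteps your nonvanishing concern by simply restricting to the open subset where ${}_2c_{a,b,c}\neq 0$ (the zero case being trivially central); both treatments are fine.
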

\begin{proof}
If ${}_2c_{a,b,c}=0$ then there is nothing to prove, so assume ${}_2c_{a,b,c}\neq 0$ for an open subset ${}_2U'' \subset U'$. By Proposition \ref{theorem:main}, for each $[a:b:c] \in {}_2U''$ there exists a fourth root of unity $\lambda_{a,b,c}$ such that for each $v \in {}_2V$, $v{}_2c_{a,b,c} = \lambda_{a,b,c}{}_2c_{a,b,c}v$.
As in the quadratic case, let 
$$
\lambda: \xymatrix{{}_4 U'' \ar[r]& \Z_4}, [a:b:c] \mapsto \lambda_{a,b,c}.
$$
be the corresponding function, then $\lambda$ is a constant function. As $\lambda([1:-2:0])=1$, it follows that $\lambda$ is the constant 1-function.
\end{proof}
\section{Quadratic Sklyanin algebras of global dimension four}
%\begin{mydef}
%The 4-dimensional Sklyanin algebras are the quotients of $\C \langle v_{00},v_{10},v_{01},v_{11}\rangle$ by the 6 relations
%\begin{align}
%\begin{cases}
%[v_{00},v_{10}]-\alpha_1 \{v_{01},v_{11}\}\\
%[v_{01},v_{11}]-\{v_{00},v_{10}\}
%\end{cases},\\
%\begin{cases}
%[v_{00},v_{01}]-\alpha_2 \{v_{11},v_{10}\}\\
%[v_{11},v_{10}]-\{v_{00},v_{01}\}
%\end{cases},\\
%\begin{cases}
%[v_{00},v_{11}]-\alpha_3 \{v_{10},v_{01}\}\\
%[v_{10},v_{01}]-\{v_{00},v_{11}\}
%\end{cases}.
%\end{align}
%The triple $(\alpha_1,\alpha_2,\alpha_3) \in \C^3$ lies on an open subset of the affine variety $\V(\alpha_1+\alpha_2+\alpha_3+\alpha_1\alpha_2\alpha_3)$.
%\label{def:4dimsklyanin}
%\end{mydef}
Recall the definition of the four dimensional Sklyanin algebra $Q_4(E,\tau)$ from \cite{odeskii1989elliptic}.
\begin{mydef}
Let $E = \C/(\Z + \Z\eta)$ be an elliptic curve with $\Im(\eta) >0$ and let $\tau \in E$. For each $\alpha \in \Z_4$, let $\theta_\alpha(z)$ be a theta function of order $4$ with respect to the lattice $\Z + \Z \eta$, that is, $\{\theta_0(z),\theta_1(z),\theta_2(z),\theta_{3}(z)\}$ are (non-constant) holomorphic functions $\xymatrix{\C \ar[r]& \C}$ defined by the following functional equations
\begin{align*}
\theta_\alpha\left(z+\frac{1}{4}\right) &= \exp\left(2 \pi I\frac{\alpha}{4}\right)\theta_\alpha(z),\\
\quad\theta_\alpha\left(z+\frac{\eta}{4}\right)&=\exp\left(-2\pi I z-\frac{\pi I}{4}+\frac{3\pi I\eta}{4}\right)\theta_{\alpha+1}(z),
\end{align*}
for each $\alpha \in \Z_4$ and indices taken in $\Z_4$. Then the Sklyanin algebra $Q_4(E,\tau)$ is defined as the quotient of $\C\langle x_1,x_1,x_2,x_3 \rangle$ by the quadratic relations
$$
\oplus_{(i,j) \in \Z_4\times \Z_4\setminus \Delta}\C\left(\sum_{r \in \Z_4} \frac{1}{\theta_{j-i-r}(-\tau)\theta_{r}(\tau)} x_{j-r}x_{i+r}\right).
$$
\end{mydef}
From this definition, it follows that
\begin{itemize}
\item the group $H_4$ acts on $Q_4(E,\tau)$ such that the degree 1 part is isomorphic to ${}_4V_1$, and
\item there is the $H_4$-isomorphism ${}_4V_1 \wedge {}_4V_1 \cong {}_4R_\tau(E)$ with 
${}_4R_\tau(E) \subset {}_4V_1 \otimes {}_4V_1$ the space of quadratic relations of $Q_4(E,\tau)$.
\end{itemize}
It will now be discussed how to find the relations of $Q_4(E,\tau)$ in general using the representation theory of $H_4$ and these two facts, without resorting to $\theta$-functions. As one wants to find $\VV$-equivariant maps, it is useful to find a basis for ${}_4V_1$ of eigenvectors with respect to the $\VV \cong \langle e_1^2,e_2^2\rangle \subset H_4$-action.
\par Take for ${}_4V_1$ the canonical basis $\{x_0,x_1,x_2,x_3\}$ as in Subsection \ref{sub:H4rep}. Taking the basis $v_{00} = x_0 + x_2,v_{10} = x_0 - x_2,v_{01} = x_1 + x_3, v_{11} = x_1 - x_3$, then the action of $H_4$ is determined by
$$
\begin{array}{cccc}
e_1 \cdot v_{00} = v_{01},& e_1 \cdot v_{10} = -v_{11},&e_1 \cdot v_{01} = v_{00}, &e_1 \cdot v_{11} = v_{10},\\
e_2 \cdot v_{00} = v_{10},& e_2 \cdot v_{10} = v_{00},&e_2 \cdot v_{01} = Iv_{11}, &e_2 \cdot v_{11} = Iv_{01}.
\end{array}
$$
Then it is easy to see that
$$
e_1^2 \cdot v_{ij} = (-1)^{i}v_{ij}, \quad e_2^2 \cdot v_{ij} = (-1)^{j}v_{ij} \text{ for } (i,j) \in \Z_2 \times \Z_2.
$$
\par A calculation shows that
\begin{align*}
{}_4V_1 \otimes {}_4V_1 &\cong {}_4V_{0,0}^{\oplus 2} \oplus {}_4V_{1,0}^{\oplus 2} \oplus {}_4V_{0,1}^{\oplus 2} \oplus {}_4V_{1,1}^{\oplus 2},\\
{}_4V_1 \wedge {}_4V_1 &\cong {}_4V_{1,0} \oplus {}_4V_{0,1} \oplus {}_4V_{1,1}.
\end{align*}
From these decompositions, it follows that 
$$
\Emb_{H_4}({}_4V_1 \wedge {}_4V_1, {}_4V_1 \otimes {}_4V_1) \cong \PP^1 \times \PP^1 \times \PP^1.
$$
A (generic) subspace of ${}_4V_1 \otimes {}_4V_1$ $H_4$-isomorphic to ${}_4V_1 \wedge {}_4V_1$ is determined by the following six relations (for $(\lambda_{10},\lambda_{01},\lambda_{11}) \in \C^3$)
\begin{align*}
\begin{cases}
[v_{00},v_{10}]-\lambda_{10} \{v_{01},v_{11}\}\\
[v_{01},v_{11}]+\lambda_{10}\{v_{00},v_{10}\}
\end{cases},\\
\begin{cases}
[v_{00},v_{01}]-\lambda_{01} \{v_{11},v_{10}\}\\
[v_{11},v_{10}]+\lambda_{01}\{v_{00},v_{01}\}
\end{cases},\\
\begin{cases}
[v_{00},v_{11}]-\lambda_{11} \{v_{10},v_{01}\}\\
[v_{10},v_{01}]-\lambda_{11}\{v_{00},v_{11}\}
\end{cases}.
\end{align*}
Let ${}_4S_{\lambda_1,\lambda_2,\lambda_3}$ be the corresponding quotient of $\C\langle v_{00},v_{10},v_{01},v_{11}\rangle$.
\par However, not every triple $(\lambda_{10},\lambda_{01},\lambda_{11}) \in \C^3$ determines a Sklyanin algebra. Only the triples $(\lambda_{10},\lambda_{01},\lambda_{11})$ such that the $4 \times 4$-minors of the matrix
$$
\begin{bmatrix}
-v_{10} & v_{00}&-\lambda_{10}v_{11}&-\lambda_{10}v_{01}\\
\lambda_{10}v_{10} & \lambda_{10}v_{00}&-v_{11}&v_{01}\\
-v_{01} & -\lambda_{01}v_{11} & v_{00} & -\lambda_{01} v_{10}\\
\lambda_{01}v_{01}&v_{11}&\lambda_{01}v_{00}&-v_{10}\\
-v_{11}&-\lambda_{11}v_{01}& -\lambda_{11}v_{10}&v_{00}\\
-\lambda_{11}v_{11}&-v_{01}& v_{10}&-\lambda_{11}v_{00}
\end{bmatrix}
$$
vanish on an elliptic curve $E_{\lambda} \subset \PP^3$ with $\lambda \in \C$, which will be embedded as the intersection of the 2 quadrics
$$
\begin{cases}
v_{00}^2+v_{10}^2-\lambda(v_{01}^2-v_{11}^2)=0,\\
v_{01}^2+v_{11}^2-\lambda(v_{00}^2-v_{10}^2)=0.
\end{cases}
$$
\begin{proposition}
The condition that the triple $(\lambda_{10},\lambda_{01},\lambda_{11})$ generically determines a Sklyanin algebra is given by
$$
\lambda_{10}^2 + \lambda_{01}^2-\lambda_{11}^2-(\lambda_{10}\lambda_{01}\lambda_{11})^2=0,
$$
with corresponding elliptic curve $E_\lambda$ with 
$$
\lambda = \frac{\lambda_{10}(\lambda_{01}\lambda_{11}+1)}{\lambda_{01}-\lambda_{11}}
$$
in its point variety.
\end{proposition}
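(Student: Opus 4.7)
The plan is to realize the point variety of ${}_4S_{\lambda_{10},\lambda_{01},\lambda_{11}}$ as the rank-drop locus of the matrix $M(p_0)$ of multi-linearized relations, and then match that locus to the elliptic curve $E_\lambda$. Setting $p_0 = [v_{00}:v_{10}:v_{01}:v_{11}] \in \PP^3$, a point module $M_{p_0}$ corresponds to a nonzero $p_1$ with $M(p_0)p_1=0$, so the point variety is cut out by the $\binom{6}{4}=15$ maximal minors of $M(p_0)$, each a quartic in the $v_{ij}$. A generic $6\times 4$ matrix of linear forms on $\PP^3$ has rank $4$ outside a finite set, so the existence of a one-dimensional point variety, which is necessary for ${}_4S_{\lambda_{10},\lambda_{01},\lambda_{11}}$ to be Sklyanin, imposes one algebraic condition on $(\lambda_{10},\lambda_{01},\lambda_{11})$; the task is to pin this condition down and to identify the resulting curve.

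First I would extract two explicit quadrics from the matrix. The rows of $M$ fall into three pairs $(1,2),(3,4),(5,6)$, one for each nontrivial $V_4 \cong \langle e_1^2, e_2^2\rangle$-character, while the columns transform as the four characters. This $V_4$-equivariance forces any quadratic piece arising from a combination of rows of $M$ to be a linear combination of the $v_{ij}^2$. A short calculation picking a particular $4$-tuple of rows (say rows from the $\lambda_{10}$- and $\lambda_{01}$-pairs) produces a quartic whose quadratic factor matches $Q_1 = v_{00}^2+v_{10}^2-\lambda(v_{01}^2-v_{11}^2)$ precisely when $\lambda=\lambda_{10}(\lambda_{01}\lambda_{11}+1)/(\lambda_{01}-\lambda_{11})$; another choice of four rows produces $Q_2 = v_{01}^2+v_{11}^2-\lambda(v_{00}^2-v_{10}^2)$ with the same $\lambda$, and compatibility of the two independent expressions for $\lambda$ forces precisely the relation $\lambda_{10}^2+\lambda_{01}^2-\lambda_{11}^2-(\lambda_{10}\lambda_{01}\lambda_{11})^2=0$.

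Second, I would close by verifying that when the relation holds, the remaining maximal minors all lie in $(Q_1,Q_2)$, so that the rank-drop locus is exactly $E_\lambda$ and not a proper subvariety. This is the main obstacle, being essentially a bulk computation of $15$ quartics tested against a two-generator quadratic ideal. I would shrink it via the $V_4$-action on rows and columns, which groups the minors into a handful of orbits under which membership need only be checked once per orbit; failing a clean symmetry argument, it can be dispatched by a brief Gr\"obner-basis check. Smoothness of $E_\lambda$ for generic $\lambda$ then confirms that generic $(\lambda_{10},\lambda_{01},\lambda_{11})$ satisfying the stated relation yield bona fide Sklyanin algebras.
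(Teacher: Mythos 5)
The paper's own proof of this proposition is the single sentence ``A computer calculation,'' so your proposal follows essentially the same route --- direct verification on the $4\times 4$ minors of the displayed $6\times 4$ matrix of multilinearized relations --- merely organized more transparently through the $\VV$-symmetry of its rows and columns. The condition you would recover is certainly the right one: under the substitution $a_{10}b_{10}=-\lambda_{10}^2$, $a_{01}b_{01}=-\lambda_{01}^2$, $a_{11}b_{11}=\lambda_{11}^2$ read off from the two presentations of the relations, it is equivalent to the standard constraint of Proposition \ref{prop:4dimsklyanin}. One correction to your closing step, though: showing that the fifteen maximal minors lie in the ideal $(Q_1,Q_2)$ proves the containment $E_\lambda=\V(Q_1,Q_2)\subseteq \V(\text{minors})$, not that the rank-drop locus is \emph{exactly} $E_\lambda$. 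It cannot be exact: at $[1:0:0:0]$ the first column of the matrix vanishes identically, so the rank drops to $3$ there for every choice of parameters while $Q_1$ takes the value $1$; as the paper records in the subsection on point modules, the point scheme of a four-dimensional Sklyanin algebra is $E_\lambda$ together with four isolated points. Since the proposition only asserts that $E_\lambda$ lies \emph{in} the point variety, the containment your ideal-membership check delivers (equivalently, that each minor vanishes on $E_\lambda$, using that $(Q_1,Q_2)$ is a radical complete intersection) is exactly what is needed --- just delete the word ``exactly.''
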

\begin{proof}
A computer calculation.
\end{proof}

\par In order to find all presentations of four dimensional Sklyanin algebras with the degree 1 part decomposed in four 1-dimensional representations of $\VV$, one still needs to allow for base change with the subgroup $\Aut_{\VV}({}_4V_1)$. By Schur's lemma, $\Aut_{\VV}({}_4V_1)\cong (\C^*)^4$, but the subgroup of scalar matrices $\C^*$ acts trivially as a Sklyanin algebra is graded.
Let $v_{ij} = t_{ij} v'_{ij}$ for $(i,j) \in \Z_2 \times \Z_2 \setminus\{(0,0)\}$ and $v_{00}=v'_{00}$. Then the relations become
\begin{align*}
\begin{cases}
[v'_{00},v'_{10}]-\lambda_{10}\frac{t_{01}t_{11}}{t_{10}}\{v'_{01},v'_{11}\}\\
[v'_{01},v'_{11}]+\lambda_{10}\frac{t_{10}}{t_{01}t_{11}}\{v'_{00},v'_{10}\}
\end{cases},\\
\begin{cases}
[v'_{00},v'_{01}]-\lambda_{01}\frac{t_{11}t_{10}}{t_{01}}\{v'_{11},v'_{10}\}\\
[v'_{11},v'_{10}]+\lambda_{01}\frac{t_{01}}{t_{11}t_{10}}\{v'_{00},v'_{01}\}
\end{cases},\\
\begin{cases}
[v'_{00},v'_{11}]-\lambda_{11}\frac{t_{10}t_{01}}{t_{11}}\{v'_{10},v'_{01}\}\\
[v'_{10},v'_{01}]-\lambda_{11}\frac{t_{11}}{t_{10}t_{01}}\{v'_{00},v'_{11}\}
\end{cases}.
\end{align*}
Taking the variables
$$
\begin{array}{ll}
a_{10}=\lambda_{10}\frac{t_{01}t_{11}}{t_{10}}, & b_{10}=-\lambda_{10}\frac{t_{10}}{t_{01}t_{11}},\\
a_{01}=\lambda_{01}\frac{t_{11}t_{10}}{t_{01}}, & b_{01}=-\lambda_{01}\frac{t_{01}}{t_{11}t_{10}},\\
a_{11}=\lambda_{11}\frac{t_{10}t_{01}}{t_{11}}, & b_{11}=\lambda_{11}\frac{t_{11}}{t_{10}t_{01}},
\end{array}
$$
one finds the following proposition.
\begin{proposition}
For generic values of $(a_{10},a_{01},a_{11},b_{10},b_{01},b_{11}) \in \C^6$ lying on the fivefold
$$
\V_5=\V(a_{10}b_{10}+a_{01}b_{01}+a_{11}b_{11}+a_{10}b_{10}a_{01}b_{01}a_{11}b_{11}),
$$
the algebra with generators $v_{00},v_{10},v_{01},v_{11}$ and relations
\begin{align}
\begin{cases}
[v_{00},v_{10}]-a_{10}\{v_{01},v_{11}\}\\
[v_{01},v_{11}]-b_{10}\{v_{00},v_{10}\}
\end{cases},\\
\begin{cases}
[v_{00},v_{01}]-a_{01}\{v_{11},v_{10}\}\\
[v_{11},v_{10}]-b_{01}\{v_{00},v_{01}\}
\end{cases},\\
\begin{cases}
[v_{00},v_{11}]-a_{11}\{v_{10},v_{01}\}\\
[v_{10},v_{01}]-b_{11}\{v_{00},v_{11}\}
\end{cases},
\end{align}
is a four dimensional Sklyanin algebra and each four dimensional Sklyanin algebra arises this way.
\label{prop:4dimsklyanin}
\end{proposition}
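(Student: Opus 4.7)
The plan is to view this proposition as a direct translation of Proposition~\ref{prop:4dimsklyanin}'s predecessor via the base change $v_{ij}=t_{ij}v'_{ij}$ introduced in the discussion just above. Under that substitution the relations of ${}_4S_{\lambda_{10},\lambda_{01},\lambda_{11}}$ rewrite exactly as the six relations in the statement, with coefficients given by the displayed formulas for $a_{ij}$ and $b_{ij}$. So the question splits into two halves: (i) identify the equation on $\C^6$ satisfied by all such $(a_{ij},b_{ij})$, and (ii) show conversely that every point of $\V_5$ (generically) comes from some Sklyanin pair $((\lambda_{10},\lambda_{01},\lambda_{11}),(t_{10},t_{01},t_{11}))$, so that every 4-dimensional Sklyanin algebra arises this way.

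For (i), the key observation is that the $t_{ij}$ cancel in the products
$$
a_{10}b_{10}=-\lambda_{10}^2,\qquad a_{01}b_{01}=-\lambda_{01}^2,\qquad a_{11}b_{11}=\lambda_{11}^2.
$$
Plugging these into the defining polynomial of $\V_5$ yields $-\lambda_{10}^2-\lambda_{01}^2+\lambda_{11}^2+\lambda_{10}^2\lambda_{01}^2\lambda_{11}^2$, which is the negative of the Sklyanin condition $\lambda_{10}^2+\lambda_{01}^2-\lambda_{11}^2-(\lambda_{10}\lambda_{01}\lambda_{11})^2=0$ from the preceding proposition. Hence every admissible $((\lambda),(t))$ produces a point of $\V_5$.

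For (ii), given $(a_{ij},b_{ij})\in\V_5$ with all coordinates nonzero I would recover $\lambda_{ij}$ by extracting a square root of $-a_{10}b_{10},-a_{01}b_{01},a_{11}b_{11}$, and then solve for $(t_{10}^2,t_{01}^2,t_{11}^2)$ from
$$
\frac{-a_{10}}{b_{10}}=\frac{t_{01}^2t_{11}^2}{t_{10}^2},\qquad \frac{-a_{01}}{b_{01}}=\frac{t_{10}^2t_{11}^2}{t_{01}^2},\qquad \frac{a_{11}}{b_{11}}=\frac{t_{10}^2t_{01}^2}{t_{11}^2}.
$$
Taking logarithms turns this into a linear system whose coefficient matrix $\bigl(\begin{smallmatrix}-1&1&1\\1&-1&1\\1&1&-1\end{smallmatrix}\bigr)$ has determinant $4$, so the $t_{ij}^2$ are uniquely determined. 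After choosing square roots, the base change produces a graded isomorphism between the algebra defined by the $(a_{ij},b_{ij})$-relations and ${}_4S_{\lambda_{10},\lambda_{01},\lambda_{11}}$, which is a 4-dimensional Sklyanin algebra by the preceding proposition. The reverse implication, that every 4-dimensional Sklyanin algebra admits such a presentation, follows from the discussion preceding the statement: every $H_4$-equivariant embedding of ${}_4V_1\wedge{}_4V_1$ in ${}_4V_1\otimes{}_4V_1$ appears in the $\lambda$-form, and the action of the diagonal torus $\Aut_{\VV}({}_4V_1)/\C^*\cong(\C^*)^3$ on presentations sweeps out all the $(a,b)$-tuples above.

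The main obstacle is the qualifier \emph{generic}: one has to excise the loci on $\V_5$ where some $a_{ij}$ or $b_{ij}$ vanishes (so that the square-root extractions and the logarithmic linear system genuinely apply), where the resulting $\lambda_{ij}$ produce a degenerate elliptic curve $E_\lambda$, or where the $4\times 4$-minor condition on the matrix displayed just before the statement fails. Each of these is a closed condition, so their complement is dense open in $\V_5$; pinning down the precise open locus is the real technical content and, like the previous proposition, is most efficiently verified by a direct computation.
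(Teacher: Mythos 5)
Your proposal is correct and follows essentially the same route as the paper: the paper offers no separate proof for this proposition, deriving it instead from the immediately preceding base change $v_{ij}=t_{ij}v'_{ij}$ applied to the relations of ${}_4S_{\lambda_{10},\lambda_{01},\lambda_{11}}$ together with the computer-verified condition $\lambda_{10}^2+\lambda_{01}^2-\lambda_{11}^2-(\lambda_{10}\lambda_{01}\lambda_{11})^2=0$, and your identities $a_{10}b_{10}=-\lambda_{10}^2$, $a_{01}b_{01}=-\lambda_{01}^2$, $a_{11}b_{11}=\lambda_{11}^2$ are exactly what turns that condition into the equation of $\V_5$. Your explicit inversion (recovering the $\lambda_{ij}$ and solving the determinant-$4$ multiplicative system for the $t_{ij}^2$) supplies the surjectivity onto the generic locus of $\V_5$ that the paper leaves implicit, so if anything your write-up is more complete than the original.
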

Usually, the intersection of $\V_5$ with $\V(b_{10}-1,b_{01}-1,b_{11}-1)$ is taken as variety parametrizing four dimensional Sklyanin algebras, as in for example \cite{smith1992regularity}. In this case, ${}_4 S_{\alpha_1,\alpha_2,\alpha_3}$ will denote the algebra with parameters
$$
\begin{array}{ll}
a_{10}=\alpha_1, & b_{10}=1,\\
a_{01}=\alpha_2, & b_{01}=1,\\
a_{11}=\alpha_3, & b_{11}=1.
\end{array}
$$
The condition that ${}_4 S_{\alpha_1,\alpha_2,\alpha_3}$ is (generically) a Sklyanin algebra then becomes $\alpha_1+\alpha_2+\alpha_3+\alpha_1\alpha_2\alpha_3=0$.
\par For any sextuple $(a_{10},a_{01},a_{11},b_{10},b_{01},b_{11}) \in \C^6$, the corresponding algebras that have $H_4$ in its automorphism group can be recognized by the rule
$$
a_{ij} = 0 \Leftrightarrow b_{ij}=0 \text{ for } (i,j) \in \Z_2 \times \Z_2 \setminus\{(0,0)\}.
$$
\subsection{The point modules}
Following \cite{smith1992regularity}, it follows that the point modules of a Sklyanin algebra ${}_4 S_{\lambda_{10},\lambda_{01},\lambda_{11}}$ are not only determined by $E_\lambda$, but there are also four point modules, corresponding to the $\C^*$-orbits of simple representations of $({}_4 S_{\lambda_{10},\lambda_{01},\lambda_{11}})_{ab}={}_4 S_{\lambda_{10},\lambda_{01},\lambda_{11}}/[{}_4 S_{\lambda_{10},\lambda_{01},\lambda_{11}},{}_4 S_{\lambda_{10},\lambda_{01},\lambda_{11}}]$. From the relations, it is easy to see that
$$
({}_4 S_{\lambda_{10},\lambda_{01},\lambda_{11}})_{ab}= \C[v_{00},v_{10},v_{01},v_{11}]/(v_{ij}v_{kl}, (i,j)\neq(k,l)).
$$
As such, ${}_4 S_{\lambda_{10},\lambda_{01},\lambda_{11}}$ has four $\C^*$-families of 1-dimensional representations.
\subsection{The center}
The tricks that were used in the global dimension three case don't work here, as the normal element in this case won't be a representation of $H_4$. Luckily, the fact that the abelianization of ${}_4 S_{\lambda_{10},\lambda_{01},\lambda_{11}}$ is not finite dimensional will help.
\par First, one has to construct a normal element of degree two. One can do this using Theorem \ref{th:cubic4dim}.
\begin{lemma}
The Sklyanin algebra ${}_4 S_{\lambda_{10},\lambda_{01},\lambda_{11}}$ has a normal element of degree two.
\end{lemma}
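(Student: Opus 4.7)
The plan is to pull a normal element of degree two out of the $\VV$-equivariant surjection
$$
\pi:\xymatrix{{}_4S_{\alpha_1,\alpha_2,\alpha_3}\ar@{->>}[r]&{}_2S_{a,b,c}^{(2)}}
$$
provided by Theorem \ref{th:cubic4dim}. I will pick a nonzero element $\omega$ of degree two in $\ker \pi$ and show, purely from a Hilbert series count together with the fact that ${}_4S_{\alpha_1,\alpha_2,\alpha_3}$ is a domain, that $\omega$ is automatically normal. The conceptual point is that the kernel of $\pi$ is a two-sided ideal, so both $\omega \cdot ({}_4S_{\alpha_1,\alpha_2,\alpha_3})_1$ and $({}_4S_{\alpha_1,\alpha_2,\alpha_3})_1 \cdot \omega$ land inside $\ker(\pi)_3$; if these two subspaces both have the maximum possible dimension, forced by injectivity of left and right multiplication in a domain, and if $\ker(\pi)_3$ itself has exactly that dimension, then the two subspaces must coincide.

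The Hilbert series computation is straightforward from the series already recorded in the introduction. One has $\dim ({}_4S_{\alpha_1,\alpha_2,\alpha_3})_n=\binom{n+3}{3}$ and $\dim ({}_2S_{a,b,c}^{(2)})_n=\dim ({}_2S_{a,b,c})_{2n}$, where the latter can be read off from $(1-t)^{-2}(1-t^2)^{-1}$. This gives the values
$$
\begin{array}{c|cccc}
n&0&1&2&3\\ \hline
\dim ({}_4S_{\alpha_1,\alpha_2,\alpha_3})_n&1&4&10&20\\
\dim ({}_2S_{a,b,c}^{(2)})_n&1&4&9&16
\end{array}
$$
so that, using the surjectivity of $\pi$, one has $\dim \ker(\pi)_2=1$ and $\dim \ker(\pi)_3=4$. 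Choose $\omega$ to be any nonzero element of $\ker(\pi)_2$.

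Since ${}_4S_{\alpha_1,\alpha_2,\alpha_3}$ is a noetherian domain (a fact listed in the introduction), both multiplication maps $v \mapsto v\omega$ and $v \mapsto \omega v$ from $({}_4S_{\alpha_1,\alpha_2,\alpha_3})_1$ into $({}_4S_{\alpha_1,\alpha_2,\alpha_3})_3$ are injective, so $({}_4S_{\alpha_1,\alpha_2,\alpha_3})_1 \cdot \omega$ and $\omega \cdot ({}_4S_{\alpha_1,\alpha_2,\alpha_3})_1$ are both four-dimensional. As $\ker(\pi)$ is a two-sided ideal, both of these four-dimensional subspaces sit inside the four-dimensional space $\ker(\pi)_3$, and therefore equal it. In particular, $({}_4S_{\alpha_1,\alpha_2,\alpha_3})_1 \cdot \omega=\omega \cdot ({}_4S_{\alpha_1,\alpha_2,\alpha_3})_1$, and since ${}_4S_{\alpha_1,\alpha_2,\alpha_3}$ is generated in degree one this yields $v \omega \in \omega \cdot {}_4S_{\alpha_1,\alpha_2,\alpha_3}$ for every $v$, i.e.\ $\omega$ is normal.

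The only real obstacle in this plan is the forward reference to Theorem \ref{th:cubic4dim}, whose proof is a computation on the explicit $\VV$-equivariant presentations constructed earlier in the section; once the surjection $\pi$ is available, the normality step is a pure Hilbert series and domain argument and does not require any further information about the shape of $\omega$.
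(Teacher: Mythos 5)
Your proposal is correct and is essentially the paper's own argument: the paper's proof consists of the single sentence that the normal element is obtained from the kernel of $\pi_{a,b,c}$ by ``Hilbert series considerations and the fact that ${}_4S_{\lambda_{10},\lambda_{01},\lambda_{11}}$ is a domain,'' and your computation ($\dim\ker(\pi)_2=1$, $\dim\ker(\pi)_3=4$, so the two four-dimensional spaces $\omega\cdot A_1$ and $A_1\cdot\omega$ forced by the domain property must both equal $\ker(\pi)_3$) is exactly the intended filling-in of that sentence. The only caveat, shared equally by the paper, is the implicit reliance on Theorem \ref{th:cubic4dim} to supply the surjection for the given Sklyanin algebra.
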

\begin{proof}
This follows directly from Hilbert series considerations and the fact that ${}_4 S_{\lambda_{10},\lambda_{01},\lambda_{11}}$ is a domain, by taking the kernel of the quotient map $$\pi_{a,b,c}:\xymatrix{ {}_4 S_{\alpha_1,\alpha_2\alpha_3}\ar@{->>}[r]& {}_2S_{a,b,c}^{(2)}}.$$
\end{proof}
\begin{theorem}
There are two central elements $\Omega_1,\Omega_2$ of degree two in a four dimensional Sklyanin algebra ${}_4 S_{\lambda_{10},\lambda_{01},\lambda_{11}}$. Generically these two elements generate the center and they are algebraically independent. In addition, 
$$
(\Omega_1,\Omega_2) = \cap_{p \in E_\lambda}\Ann_{{}_4 S_{\lambda_{10},\lambda_{01},\lambda_{11}}} M_p
$$
with as before $M_p$ the point module corresponding to $p \in E_\lambda$.
\end{theorem}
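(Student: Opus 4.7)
The plan has three stages: produce a $2$-dimensional space $W$ of normal elements inside $({}_4S_{\lambda_{10},\lambda_{01},\lambda_{11}})_2$ forming an $H_4$-subrepresentation isomorphic to ${}_2V$; establish centrality of every element of $W$ via a refinement of Proposition \ref{theorem:main}; and derive algebraic independence of $\Omega_1,\Omega_2$ together with the annihilator description from an identification of the quotient with a twisted homogeneous coordinate ring. Throughout, write $A={}_4S_{\lambda_{10},\lambda_{01},\lambda_{11}}$.

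First, the preceding lemma supplies a normal element $\Omega\in A_2$ as the generator of $\ker\pi_{a,b,c}$. Since $H_4$ acts by graded algebra automorphisms, the $H_4$-orbit of $\Omega$ spans an $H_4$-subrepresentation $W\subset A_2$. Using the decomposition
\[
\operatorname{Sym}^2{}_4V_1\cong{}_4V_{0,0}^{\oplus 2}\oplus{}_4V_{1,0}\oplus{}_4V_{0,1}\oplus{}_4V_{1,1}
\]
(which follows from the decompositions of ${}_4V_1\otimes{}_4V_1$ and ${}_4V_1\wedge{}_4V_1$ given earlier), together with the $\VV$-equivariance of $\pi_{a,b,c}$ asserted in Theorem \ref{th:cubic4dim}, $W$ is identified as a single copy of ${}_2V={}_4V_{0,0}$ sitting inside the isotypical component ${}_4V_{0,0}^{\oplus 2}$. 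Fix a basis $\Omega_1,\Omega_2$ of $W$ adapted to the ${}_2V$-structure: $e_1\Omega_1=\Omega_2$, $e_2\Omega_1=\Omega_1$, $e_2\Omega_2=-\Omega_2$.

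For centrality, the twisting automorphism $\sigma_1\in\Aut_{gr}(A)$ satisfying $v\Omega_1=\Omega_1\sigma_1(v)$ is analyzed. Following the proof of Proposition \ref{theorem:main} with $G$ replaced by the stabilizer $G'\subset H_4$ of the line $\C\Omega_1$, one deduces $\sigma_1\in\operatorname{End}_{G'}({}_4V_1)$. A direct computation gives $G'=\langle e_1^2,e_2,[e_1,e_2]\rangle$, under which ${}_4V_1$ decomposes as $(\C x_0+\C x_2)\oplus(\C x_1+\C x_3)$ into two inequivalent simple $2$-dimensional summands, so by Schur $\sigma_1=(\mu_1,\mu_2)$ is a pair of scalars. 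Applying $\sigma_1$ to the Sklyanin relation $[v_{00},v_{10}]-\lambda_{10}\{v_{01},v_{11}\}=0$ forces $\mu_1^2=\mu_2^2$, and the identity $\sigma_1(\Omega_1)=\Omega_1$ (from $\Omega_1\cdot\Omega_1=\Omega_1\sigma_1(\Omega_1)$ in the domain $A$) forces $\mu_1^2=1$, so $(\mu_1,\mu_2)\in\{\pm 1\}^2$. A specialization to the Clifford-algebra limit $\lambda_{10}=\lambda_{01}=\lambda_{11}=0$, where ${}_4S_{0,0,0}$ becomes $\C[v_{00}]\otimes\operatorname{Cliff}(v_{10},v_{01},v_{11})$ and the entire $(1,1)$-eigenspace of $\operatorname{Sym}^2{}_4V_1$ is manifestly central, combined with irreducibility of the parameter variety $\V_5$, pins $(\mu_1,\mu_2)$ to $(1,1)$ throughout the Sklyanin locus. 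Hence $\Omega_1$ is central; since $e_1$ is a graded algebra automorphism, so is $\Omega_2=e_1\Omega_1$, and therefore all of $W$.

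For the remaining claims, $\Omega_1$ and $\Omega_2$ form a regular sequence in $A$: both $A$ and the quotient $A/(\Omega_1)\cong{}_2S_{a,b,c}^{(2)}$ are domains, making $\Omega_1$ and then $\Omega_2$ non-zero-divisors. This yields $H_{A/(\Omega_1,\Omega_2)}(t)=(1+t)^2/(1-t)^2$, the Hilbert series of the twisted homogeneous coordinate ring $B$ of $E_\lambda$ for translation by $\tau$ together with a degree-$4$ line bundle. Identifying $A/(\Omega_1,\Omega_2)\cong B$, for instance by comparing point modules (both parametrized by $E_\lambda$), simultaneously yields algebraic independence of $\Omega_1,\Omega_2$ (so that $\C[\Omega_1,\Omega_2]$ injects into $A$), the annihilator equality $(\Omega_1,\Omega_2)=\bigcap_{p\in E_\lambda}\Ann_A M_p$ (from faithfulness of $B$ on $\bigoplus_{p\in E_\lambda}M_p$), and generic center generation (since $Z(B)=\C$ for $\tau$ of infinite order). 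The main technical obstacle will be this identification $A/(\Omega_1,\Omega_2)\cong B$, on which the annihilator and generic-center claims both rest; a secondary hurdle in the centrality step is justifying continuity of the twisting scalar $(\mu_1,\mu_2)$ across the boundary of the Sklyanin locus in $\V_5$ for the specialization argument.
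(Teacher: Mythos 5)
Your centrality argument rests on a step that is not justified: the normality of your adapted basis vector $\Omega_1$. The preceding lemma produces one normal element of degree two, namely the generator $\mathfrak{n}_{a,b,c}$ of $\ker\pi_{a,b,c}$. Its $H_4$-orbit consists of normal elements (images of a normal element under algebra automorphisms) and spans the $2$-dimensional space $W\cong{}_2V$, but a linear combination of normal elements need not be normal, and the $e_2$-eigenvectors of $W$ — which is what you take $\Omega_1$ and $\Omega_2$ to be — do not lie on that orbit for generic $[a:b:c]$: one checks directly that $e_2\cdot\mathfrak{n}_{a,b,c}$ is proportional to $\mathfrak{n}_{a,b,c}$ only for degenerate parameter values. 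Hence the twisting automorphism $\sigma_1$ with $v\Omega_1=\Omega_1\sigma_1(v)$, on which everything that follows depends, is not known to exist. If you instead run the argument on $\mathfrak{n}_{a,b,c}$ itself, which \emph{is} normal, its line-stabilizer in $H_4$ is only $\langle e_1^2,e_2^2,[e_1,e_2]\rangle$, under which ${}_4V_1$ splits into four pairwise inequivalent characters $\C v_{ij}$; Schur then gives four independent scalars rather than two, and while the relation-preservation constraints together with $\sigma(\mathfrak{n}_{a,b,c})=\mathfrak{n}_{a,b,c}$ still confine them to a finite set (so a specialization argument could in principle finish), this is a different and heavier computation than the one you describe. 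Two further inaccuracies: the limit at $\lambda_{10}=\lambda_{01}=\lambda_{11}=0$ is the commutative polynomial ring $\C[v_{00},v_{10},v_{01},v_{11}]$, not $\C[v_{00}]$ tensored with a Clifford algebra (harmless, since centrality is automatic there, but it signals that the degeneration has not been checked); and the continuity of the twisting scalars across the parameter space, which you flag as a "secondary hurdle," is in fact essential and left open.

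It is worth knowing that the paper deliberately avoids this entire strategy. It notes that the degree-three tricks fail here precisely because $\C\Omega_1$ is not an $H_4$-subrepresentation, and instead uses the abelianization: since $({}_4S_{\lambda_{10},\lambda_{01},\lambda_{11}})_{ab}\cong\C[v_{00},v_{10},v_{01},v_{11}]/(v_{ij}v_{kl},\ (i,j)\neq(k,l))$ is infinite-dimensional, and $\Omega_1=\mathfrak{n}_{a,b,c}$ does not annihilate the four exceptional $\C^*$-families of one-dimensional representations (otherwise they would give point modules of ${}_2S_{a,b,c}^{(2)}$ off $E_\lambda$), the element $\overline{\Omega_1}$ is a non-zero-divisor in the abelianization; the identity $\overline{\Omega_1}\,\overline{v}=\overline{\Omega_1}\,\overline{\sigma(v)}$ then forces $\sigma$ to be the identity, with no Schur analysis, no specialization, and no irreducibility of the parameter variety required. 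For the remaining claims the paper also does less than you propose: it never identifies the quotient with a twisted homogeneous coordinate ring, but gets algebraic independence from the fact that ${}_2S_{a,b,c}$ is a domain and the annihilator equality from the Hilbert series $(1+t)^2/(1-t)^2$ of ${}_4S_{\lambda_{10},\lambda_{01},\lambda_{11}}/(\Omega_1,\Omega_2)$; so the isomorphism you single out as the main technical obstacle can be bypassed entirely.
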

\begin{proof}
Let $\Omega_1$ be the normal element from the previous lemma. As ${}_4 S_{\lambda_{10},\lambda_{01},\lambda_{11}}$ is a domain, there exists an automorphism $\sigma \in \Aut_{gr}({}_4 S_{\lambda_{10},\lambda_{01},\lambda_{11}})$ such that $v\Omega_1 = \Omega_1 \sigma(v)$ for each $v \in {}_4 S_{\lambda_{10},\lambda_{01},\lambda_{11}}$.
\par Looking at the ring $({}_4 S_{\lambda_{10},\lambda_{01},\lambda_{11}})_{ab}$ one sees that for each $v \in {}_4V_1$
$$
\overline{\Omega_1}\overline{v}=\overline{v}\overline{\Omega_1} = \overline{\Omega_1}\overline{\sigma(v)}.
$$
The element $\Omega_1$ does not annihilate any of the simple representations coming from  $({}_4 S_{\lambda_{10},\lambda_{01},\lambda_{11}})_{ab}$, for if it did then such a $\C^*$-family of simple representations would correspond to a point module of ${}_2 S_{a,b,c}^{(2)}$ for some $[a:b:c]\in \PP^2$. But the point modules of ${}_2 S_{a,b,c}^{(2)}$  are the same as those of ${}_2 S_{a,b,c}$ which were already shown to be parametrized by ${}_2E_{a,b,c}\cong E_\lambda$ for some $\lambda \in \C$.
\par Consequently, $\Omega_1$ is not a zero divisor in $({}_4 S_{\lambda_{10},\lambda_{01},\lambda_{11}})_{ab}$, so that $\overline{v} = \overline{\sigma(v)}$ for each $v \in {}_4V_1$, which implies that $\sigma$ is the identity and $\Omega_1$ is therefore central.
\par As each $H_4$-subrepresentation of ${}_4V_1 \otimes {}_4V_1$ is 2-dimensional, it follows that there exists at least one other element in the center, let $\Omega_2$ be such a central element, linearly independent of $\Omega_1$. Then both $\Omega_1$ and $\Omega_2$ annihilate all the point modules lying on $E_\lambda$, as $E_\lambda$ is an $H_4$-set. Due to the fact that ${}_2S_{a,b,c}$ is a domain, it follows that $\Omega_1$ and $\Omega_2$ are algebraically independent.
\par From this, it follows that
$$
(\Omega_1,\Omega_2) = \cap_{p \in E_\lambda}\Ann_{{}_4 S_{\lambda_{10},\lambda_{01},\lambda_{11}}} M_p
$$
due to the fact that the quotient ${}_4 S_{\lambda_{10},\lambda_{01},\lambda_{11}}/(\Omega_1,\Omega_2)$ has Hilbert series
$$
1+\sum_{k=1}^\infty 4kt^k=\frac{(1+t)^2}{(1-t)^2}.
$$
\end{proof}
\section{Noncommutative quadrics lie on noncommutative $\PP^3$'s}
\begin{theorem}
The second Veronese subalgebra ${}_2 S_{a,b,c}^{(2)}$ of a cubic Sklyanin algebra ${}_2 S_{a,b,c}$ is a quotient of the four dimensional Sklyanin algebra ${}_4S_{\alpha_1,\alpha_2,\alpha_3}$ with
$$
\alpha_1 = \frac{bc}{a^2}, \quad \alpha_2 = -\frac{(b+c)^2-4a^2}{(b-c)^2}, \quad \alpha_3 = \frac{(b-c)^2-4a^2}{(b+c)^2}.
$$
In addition, the quotient map $\pi_{a,b,c}:\xymatrix{{}_4S_{\alpha_1,\alpha_2,\alpha_3} \ar@{->>}[r]& {}_2 S_{a,b,c}^{(2)}}$ is $\VV$-equivariant with respect to the induced action of $\VV$ on ${}_4S_{\alpha_1,\alpha_2,\alpha_3}$ coming from the inclusion $\VV \subset H_4$ and the action of $\VV$ on ${}_2S_{a,b,c}^{(2)}$ coming from the isomorphism $H_2/([e_1,e_2])\cong \VV$.
\label{th:cubic4dim}
\end{theorem}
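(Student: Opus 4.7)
The plan is to exhibit $\pi_{a,b,c}$ as the algebra map induced by $v_{ij}\mapsto w_{ij}$, where the $w_{ij}\in({}_2S_{a,b,c})_2$ are the $\VV$-eigenvectors introduced in the section on cubic Sklyanin algebras, and then to verify that the six quadratic relations of Proposition \ref{prop:4dimsklyanin} are satisfied in ${}_2S_{a,b,c}^{(2)}$ for the stated values of the $\alpha_i$. Surjectivity is automatic once the map is known to descend, because the $w_{ij}$ span $({}_2S_{a,b,c}^{(2)})_1=({}_2S_{a,b,c})_2$; and $\VV$-equivariance is automatic on generators because $\VV$ acts on both ${}_4V_1$ restricted to $\VV \subset H_4$ and on the linear span of the $w_{ij}$ with the same character list $\{{}_2\chi_{ij}:(i,j)\in\Z_2\times\Z_2\}$.

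The central step is to extract six quadratic relations of the Sklyanin type $[w_{ij},w_{kl}]=\alpha\{w_{mn},w_{pq}\}$ from the two cubic relations $\gamma(x)=\gamma(y)=0$. The key inputs are the two identities for $2\gamma(x)$ and $2\gamma(y)$ already displayed in the excerpt, which express each cubic relation as a $\VV$-isotypic combination of the ``$w$-on-the-left'' monomials $\{w_{ij}\cdot x,w_{ij}\cdot y\}$ and of the ``$w$-on-the-right'' monomials $\{x\cdot w_{ij},y\cdot w_{ij}\}$. I would multiply each $w$-on-the-left expression for $\gamma(x)=0$ and $\gamma(y)=0$ by $x$ and by $y$ on the right, and each $w$-on-the-right expression by $x$ and by $y$ on the left, obtaining eight degree-four consequences, each a $\C$-linear combination of the sixteen monomials $w_{ij}w_{kl}$. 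Since $({}_2V\otimes{}_2V)^{\otimes 2}$ decomposes under $\VV$ into isotypic pieces, each spanned by four of the sixteen monomials, and since the wedge ${}_4V_1\wedge{}_4V_1$ restricted to $\VV$ meets each nontrivial character exactly once, taking appropriate linear combinations of the eight relations forces the ``commutator $=$ scalar $\cdot$ anticommutator'' shape in each of the three nontrivial $\VV$-isotypic components. Reading off the coefficients gives the sextuple $(a_{ij},b_{ij})$, and the rescaling $v_{ij}=t_{ij}v'_{ij}$ described before Proposition \ref{prop:4dimsklyanin} normalises $b_{ij}=1$, yielding exactly $\alpha_1=bc/a^2$, $\alpha_2=-((b+c)^2-4a^2)/(b-c)^2$, $\alpha_3=((b-c)^2-4a^2)/(b+c)^2$.

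A direct algebraic verification confirms the polynomial identity $\alpha_1+\alpha_2+\alpha_3+\alpha_1\alpha_2\alpha_3=0$ in $\C(a,b,c)$, so ${}_4S_{\alpha_1,\alpha_2,\alpha_3}$ is a four-dimensional Sklyanin algebra for generic $[a:b:c]$ by Proposition \ref{prop:4dimsklyanin}. Consequently $\pi_{a,b,c}$ descends to a surjective algebra map ${}_4S_{\alpha_1,\alpha_2,\alpha_3}\twoheadrightarrow {}_2S_{a,b,c}^{(2)}$, and its $\VV$-equivariance is inherited from the equivariance of $v_{ij}\mapsto w_{ij}$ together with the compatibility of the two $\VV$-actions via the isomorphism $H_2/\langle[e_1,e_2]\rangle\cong\VV$ and the quotient $H_4\twoheadrightarrow H_2$.

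The main obstacle is the linear-algebraic bookkeeping in the central step: from the eight degree-four consequences of $\gamma(x)=\gamma(y)=0$ one must produce exactly six relations of the Sklyanin form, one in each of the six $\VV$-isotypic summands of the image of ${}_4V_1\wedge{}_4V_1$. The subsequent verification of $\alpha_1+\alpha_2+\alpha_3+\alpha_1\alpha_2\alpha_3=0$ is a short but essential polynomial identity in $a,b,c$.
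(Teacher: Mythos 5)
Your proposal follows essentially the same route as the paper: multiply the two cubic relations $\gamma(x),\gamma(y)$ by the generators on either side, sort the resulting degree-four consequences into $\VV$-isotypic pieces written in the $w_{ij}$-variables, read off the six Sklyanin-type relations (and hence the $\alpha_i$, after the rescaling that normalises $b_{ij}=1$), and verify $\alpha_1+\alpha_2+\alpha_3+\alpha_1\alpha_2\alpha_3=0$. The only caveat is that the representation theory merely places each pair of relations inside a four-dimensional isotypic block of ${}_2V^{\otimes 4}$; that the resulting two-dimensional subspace actually takes the form $\mathrm{span}\bigl([w,w']-a\{w'',w'''\},\,[w'',w''']-b\{w,w'\}\bigr)$ is not forced abstractly (a generic plane in that block is not of this shape) and must be confirmed by the explicit coefficient computation, which is precisely what the paper carries out and what you correctly identify as the main bookkeeping step.
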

\begin{proof}
As before, let ${}_2R_{a,b,c}$ be the 2-dimensional subspace of ${}_2V^{\otimes 3}$ generated by the relations of ${}_2 S_{a,b,c}$. Then ${}_2R_{a,b,c} \otimes {}_2V + {}_2V \otimes {}_2R_{a,b,c}$ decomposes as $\VV$-module as
$$
{}_2R_{a,b,c} \otimes {}_2V + {}_2V \otimes {}_2R_{a,b,c} \cong {}_2\chi_{0,0}\oplus{}_2\chi_{1,0}^{\oplus 2}\oplus{}_2\chi_{0,1}^{\oplus 2}\oplus{}_2\chi_{1,1}^{\oplus 2}.
$$
So to construct a quotient map $\pi:\xymatrix{{}_4S_{\alpha_1,\alpha_2,\alpha_3} \ar@{->>}[r]& {}_2S_{a,b,c}}$ for some value of $(\alpha_1,\alpha_2,\alpha_3)$ that is $\VV$-equivariant, one needs to decompose ${}_2R_{a,b,c} \otimes {}_2V + {}_2V \otimes {}_2R_{a,b,c}$ as $\VV$-representation with respect to the variables $w_{00},w_{10},w_{01},w_{11}$ from above. By a calculation, one finds for the $\VV$-submodule isomorphic to ${}_2\chi_{1,0}^{\oplus 2}$:
\begin{align*}
2(\gamma(x)x-\gamma(y)y) &= (a+c)w_{00}w_{10}+(c-a)w_{10}w_{00}-(a+b)w_{01}w_{11}+(a-b)w_{11}w_{01},\\
2(x\gamma(x)-y\gamma(y)) &= (a+c)w_{10}w_{00}+(c-a)w_{00}w_{10}+(a+b)w_{11}w_{01}+(b-a)w_{01}w_{11}.
\end{align*}
These relations generate the same vector space as
$$
\begin{cases}
a[w_{00},w_{10}]-b\{w_{01},w_{11}\},\\
a[w_{01},w_{11}]-c\{w_{00},w_{10}\}.
\end{cases}
$$
This leads to the claimed value $\alpha_1 = \frac{bc}{a^2}$.
\par Similar, for the $\VV$-submodule isomorphic to ${}_2\chi_{0,1}^{\oplus 2}$:
\begin{align*}
2(\gamma(x)y+\gamma(y)x) &= (a+c)w_{00}w_{01}+(c-a)w_{10}w_{11}+(a+b)w_{01}w_{00}+(b-a)w_{11}w_{10},\\
2(y\gamma(x)+x\gamma(y)) &= (a+c)w_{01}w_{00}+(a-c)w_{11}w_{10}+(a+b)w_{00}w_{01}+(a-b)w_{10}w_{11},
\end{align*}
which leads to the vector space generated by
$$
\begin{cases}
(c-b)[w_{00},w_{01}]+(b+c-2a)\{w_{11},w_{10}\},\\
(b-c)[w_{11},w_{01}]+(b+c+2a)\{w_{00},w_{01}\},
\end{cases}
$$
which gives the value $\alpha_2 = -\frac{(b+c)^2-4a^2}{(b-c)^2}$.
\par Lastly, for the submodule isomorphic to ${}_2\chi_{1,1}^{\oplus 2}$:
\begin{align*}
2(\gamma(x)y-\gamma(y)x) &= (a+c)w_{00}w_{11}+(c-a)w_{10}w_{01}-(a+b)w_{01}w_{10}+(a-b)w_{11}w_{00},\\
2(x\gamma(y)-y\gamma(x)) &= (a+c)w_{11}w_{00}+(a-c)w_{01}w_{10}+(a+b)w_{10}w_{01}+(a-b)w_{00}w_{11},
\end{align*}
which leads to the relations
$$
\begin{cases}
(b+c)[w_{00},w_{11}]+(-2a-b+c)\{w_{10},w_{01}\},\\
(b+c)[w_{10},w_{01}]+(2a-b+c)\{w_{00},w_{11}\},
\end{cases}
$$
which leads to the value $\alpha_3 = \frac{(b-c)^2-4a^2}{(b+c)^2}$.
\par A calculation shows that for these values, $\alpha_1+\alpha_2+\alpha_3 + \alpha_1 \alpha_2 \alpha_3 = 0$, so $ {}_2S_{a,b,c}$ is indeed a quotient of the 4-dimensional Sklyanin algebra ${}_4S_{\alpha_1,\alpha_2,\alpha_3}$.
\end{proof}
%The only thing that is perhaps not immediately clear is this
%\begin{proposition}
%The rational map $\mu:\xymatrix{\PP^2\ar@{-->}[r]& \V(\alpha_1+\alpha_2+\alpha_3+\alpha_1 \alpha_2 \alpha_2)\subset \A^3}$ defined by 
%$$
%\alpha_1 = \frac{bc}{a^2}, \quad \alpha_2 = -\frac{(b+c)^2-4a^2}{(b-c)^2}, \quad \alpha_3 = \frac{(b-c)^2-4a^2}{(b+c)^2}
%$$
%is dominant.
%\end{proposition}
%\begin{proof}
%Take the open subset of $U \subset \PP^2$ parametrizing cubic Sklyanin algebras, then $\mu$ sends the associated elliptic curve and the point $\tau \in E$ to the Sklyanin algebra associated to the point $Q_4(E,2\tau)$, in particular $\mu$ has as image all 4-dimensional Sklyanin algebras (up to isomorphism).
%\end{proof}
Let ${}_4 S_{a,b,c}$ be the four dimensional Sklyanin algebra with the values of $a_{ij}$ and $b_{ij}$ from the proof of the previous theorem. Then the extra relation one needs to go from ${}_4 S_{a,b,c}$ to ${}_2 S_{a,b,c}$ is given by
\begin{equation}
\mathfrak{n}_{a,b,c}=(a+c)v_{00}^2+(c-a)v_{10}^2+(a+b)v_{01}^2+(b-a)v_{11}^2.
\label{extrarelation}
\end{equation}
As this element is fixed under the action of $\VV \subset H_4$, it follows that $H_4 \cdot \mathfrak{n}_{a,b,c} \cong {}_4V_{0,0}$ as $H_4$-representation.
\par Consequently, as $\pi_{a,b,c}$ is $\VV$-equivariant, if one takes $\Omega_1 = \mathfrak{n}_{a,b,c}$ and $\Omega_2$ any linearly independent element in $H_4 \cdot \Omega_1$, then $\pi_{a,b,c}(\Omega_2) = \mu  {}_2 c_{a,b,c}$ for some $\mu \in \C^*$ and $\C ({}_2c_{a,b,c})$ is isomorphic to ${}_2 \chi_{0,0}$.
\bibliographystyle{abbrv}
\bibliography{computationalproof}

\end{document}